\documentclass[reqno,11pt]{amsart}

\usepackage[english]{babel}
\usepackage[utf8]{inputenc}
\usepackage[T1]{fontenc}

\usepackage[margin=1in, letterpaper]{geometry}
\setlength{\parskip}{0.5em}
\setlength{\parindent}{0em}

\usepackage{amssymb,amsmath,amsfonts,amsthm}
\usepackage{bm,bbm,mathrsfs}
\usepackage[dvipsnames]{xcolor}
\usepackage{url}
\usepackage{enumitem}
\usepackage{mathtools}
\usepackage{placeins}
\usepackage{extarrows}
\usepackage{subfig}
\usepackage[font=small,labelfont=bf]{caption}
\usepackage{floatrow}
\usepackage{verbatim}
\usepackage{wrapfig}
\usepackage{tikz}
\usetikzlibrary{arrows,arrows.meta,cd,decorations.pathmorphing,backgrounds,positioning,fit,matrix}
\usepackage{xcolor,colortbl,graphics,graphicx}
\usepackage[colorlinks=true, allcolors=Blue, pdfencoding=auto]{hyperref}
\usepackage{aliascnt}
\usepackage[noabbrev,capitalize]{cleveref}
\crefname{equation}{}{}

\theoremstyle{plain}
\newtheorem{theorem}{Theorem}[section]
\newaliascnt{lemma}{theorem}
\newtheorem{lemma}[lemma]{Lemma}
\aliascntresetthe{lemma}
\crefname{lemma}{Lemma}{Lemmas} 
\newaliascnt{proposition}{theorem}
\newtheorem{proposition}[proposition]{Proposition}
\aliascntresetthe{proposition}
\crefname{proposition}{Proposition}{Propositions}
\newaliascnt{corollary}{theorem}
\newtheorem{corollary}[corollary]{Corollary}
\aliascntresetthe{corollary}
\crefname{corollary}{Corollary}{Corollaries}
\newaliascnt{problem}{theorem}
\newtheorem{problem}[problem]{Problem}
\aliascntresetthe{problem}
\crefname{problem}{Problem}{Problems}
\newaliascnt{example}{theorem}

\aliascntresetthe{example}
\crefname{example}{Example}{Examples}
\newaliascnt{question}{theorem}

\aliascntresetthe{question}
\crefname{question}{Question}{Questions}
\newaliascnt{conjecture}{theorem}

\aliascntresetthe{conjecture}
\crefname{conjecture}{Conjecture}{Conjectures}
\newaliascnt{assumption}{theorem}

\aliascntresetthe{assumption}
\crefname{assumption}{Assumption}{Assumptions}

\theoremstyle{definition}
\newaliascnt{definition}{theorem}
\newtheorem{definition}[definition]{Definition}
\aliascntresetthe{definition}
\crefname{definition}{Definition}{Definitions}
\newaliascnt{notation}{theorem}

\aliascntresetthe{notation}
\crefname{notation}{Notation}{Notations}
\newaliascnt{remark}{theorem}
\newtheorem{remark}[remark]{Remark}
\aliascntresetthe{remark}
\crefname{remark}{Remark}{Remarks}

\crefname{equation}{}{}
\numberwithin{equation}{section}
\allowdisplaybreaks


\newcommand{\St}{\textnormal{St}}
\newcommand{\Sp}{\textnormal{Sp}}
\newcommand{\Z}{\mathbb{Z}}
\newcommand{\R}{\mathbb{R}}
\newcommand{\Q}{\mathbb{Q}}
\newcommand{\C}{\mathbb{C}}

\newcommand{\A}{\mathbb{A}}

\newcommand{\mB}{\mathcal{B}}

\newcommand{\mF}{\mathcal{F}}

\newcommand{\mI}{\mathcal{I}}

\newcommand{\mS}{\mathcal{S}}

\newcommand{\mf}{\mathfrak{f}}
\newcommand{\mfC}{\mathfrak{C}}
\newcommand{\Sym}{\textnormal{Sym}}
\newcommand{\Arg}{\textnormal{Arg}}
\newcommand{\one}{\mathbbm{1}}

\renewcommand{\Re}{\textnormal{Re}}
\renewcommand{\Im}{\textnormal{Im}}
\newcommand{\GL}{\textnormal{GL}}
\newcommand{\SL}{\textnormal{SL}}

\newcommand{\eps}{\varepsilon}

\renewcommand{\bar}{\overline}

\renewcommand{\tilde}{\widetilde}

\renewcommand{\mod}[1]{\ \textnormal{mod } #1}
\renewcommand{\pmod}[1]{\ (\textnormal{mod } #1)}

\newcommand{\Ind}{\textnormal{Ind}}


\definecolor{myBlue}{rgb}{0, 0, 0.6}

\title{Density theorems for $\textnormal{GL}_n$ via Rankin--Selberg $L$-functions}
\author[J. D. Lichtman]{Jared Duker Lichtman}
\address{Department of Mathematics, Stanford University, Stanford, CA, USA}
\email{jared.d.lichtman@gmail.com}

\author[A. Pascadi]{Alexandru Pascadi}
\address{Mathematisches Institut, Endenicher Allee 60, 53115 Bonn, Germany}
\email{alexpascadi@gmail.com}

\begin{document}
\begin{abstract}
    We obtain density theorems for exceptional cuspidal automorphic representations of $\text{GL}_n$ over $\mathbb{Q}$ that fail the generalized Ramanujan conjecture at some place. We depart from approaches based on trace formulae, and instead rely on properties of families of Rankin--Selberg $L$-functions. This improves previous density results near the threshold of the pointwise bounds.
\end{abstract}

\maketitle

\vspace{-0.8cm}
{
\setlength{\parskip}{0em}
\setcounter{tocdepth}{1}
\tableofcontents
}
\vspace{-1.2cm}

\section{Introduction} \label{sec:intro}

Several results in analytic number theory \cite{topacogullari2018shifted,de2020niveau,drappeau2023one,wu2023fourth,lichtman2023primes} depend on the best progress towards the Ramanujan--Petersson conjecture and its archimedean counterpart, Selberg's eigenvalue conjecture. These concern the sizes of the Hecke and Laplacian eigenvalues of automorphic forms for congruence subgroups of $\SL_2(\Z)$, corresponding to the local parameters of $\GL_2$ automorphic representations. 
While the full conjectures seem to be out of the reach of current methods, it is desirable for applications to obtain partial results about the (conjecturally inexistent) \emph{exceptional forms}, which disobey the Ramanujan and Selberg bounds. In some cases, such substitutes can even match the best conditional results \cite{watt1995kloosterman,assing2021uniform,pascadi2026large}.

The $\GL_n$ setting \cite{luo1999generalized,blomer2011ramanujan,blomer2023density,assing2022density} has also attracted significant interest, in part because it leads to bounds for $\GL_2$ via symmetric power lifts \cite{luo1995selberg,kim2003functoriality}.
The \emph{Generalized Ramanujan Conjecture} (GRC), one of the key unsolved problems in number theory \cite{blomer2013role}, asserts that the local components of cuspidal automorphic representations of $\GL_n$ over a number field are tempered. For concreteness, let $\A_\Q$ denote the ring of adeles of $\Q$, $n \ge 2$, and 
$\mathfrak{F}_n$ be the family of all cuspidal automorphic representations of $\GL_n(\A_\Q)$, normalized so that their central characters are unitary and trivial on the diagonally-embedded positive reals (this ensures that no two $\pi, \pi' \in \mathfrak{F}_n$ are equivalent by an archimedean twist). Each
$\pi \in \mathfrak{F}_n$ has a generic, unitary, irreducible component at each place $v$, with associated Langlands parameters $\{\mu_{\pi,j}(v)\}_{j=1}^n$. At the unramified places $v$, the GRC predicts that
\[
    \Re\, \mu_{\pi,j}(v) = 0.
\]
What motivates the present work is a discrepancy between the two main ways to make progress towards the GRC. On the one hand, the best pointwise bounds (i.e., valid for any given representation $\pi$) rely on estimating sums of Dirichlet coefficients of Rankin--Selberg and symmetric power $L$-functions. Without further assumptions, the record at unramified places $v$ of $\Q$ is
\begin{equation} \label{eq:best-pointwise-bounds}
    |\Re\, \mu_{\pi,j}(v)| \le 
    \begin{cases} 
    \frac{1}{2} - \frac{1}{n^2+1}, & \text{if $n \ge 5$}, \\
    \frac{1}{2} - \frac{1}{\frac{n(n+1)}{2}+1}, & \text{if $n \in \{3, 4\}$}, \\
    \frac{7}{64}, & \text{if $n = 2$,}
    \end{cases} 
\end{equation}
due to Luo--Rudnick--Sarnak \cite{luo1995selberg,luo1999generalized} when $n \ge 5$ (see also \cite{serreletter,rudnick1996zeros}), and to Kim--Sarnak \cite[Appendix 2]{kim2003functoriality} otherwise. We also point the reader to \cite{muller2004absolute,blomer2011ramanujan} for the case of ramified places $v$, and to \cite{luo1999generalized,blomer2011ramanujan} for general number fields. 

On the other hand we have results holding on average, stating that there are few representations in a given family which fail the GRC by too much. The best such results have come from the spectral theory of automorphic forms \cite{iwaniec1997topics,humphries2018density,blomer2023density,assing2022density} and relied, through trace formulae \cite{deshouillers1982kloosterman,assing2022density}, on bounds for Kloosterman sums. For families corresponding to a congruence subgroup, Sarnak's density hypothesis \cite{sarnak1990diophantine,sarnak1991bounds} gives a prediction based on a linear interpolation between two extreme cases; one of these cases is the non-cuspidal trivial representation, so one should expect Sarnak's hypothesis to be suboptimal for cusp forms. Indeed, given a positive integer $q$ and $I \subset [0, \infty)$, let us denote by $\mF_I(q)$ the family of cuspidal automorphic representations of $\GL_n(\A_\Q)$ induced by Hecke--Maass forms which are invariant under $\Gamma_0(q) \subset \SL_n(\Z)$, and which have Laplacian eigenvalues $\lambda \in I$. The following result of Blomer \cite[Theorem 1]{blomer2023density} goes beyond Sarnak's density hypothesis for this family.

\begin{theorem}[Blomer \cite{blomer2023density}] \label{thm:blomer-density}
Let $v$ be a place of $\Q$, $q \neq v$ be a prime, and $I \subset [0, \infty)$ be a fixed compact interval. Then for any $\eps > 0$ and $\theta \in [0, \tfrac{1}{2})$, one has
\[
    \#\left\{\pi \in \mF_I(q) : \max_j |\Re\, \mu_{\pi,j}(v)| \ge \theta\right\}
    \ll_{n,v,I,\eps} 
    q^{n - 1 - 4\theta + \eps}.
\]
\end{theorem}
On $\GL_2$, this result is due to Iwaniec \cite{iwaniec1990small}; see also the results of Humphries \cite{humphries2018density}. We also mention Jana's density result for $\textnormal{PGL}_n(\Z)$ \cite[Theorem 3]{jana2021applications}, which matches Sarnak's density hypothesis.

A consequence of the two different methods leading to these pointwise and on-average results is that density theorems like \cref{thm:blomer-density} fail to see\footnote{Blomer \cite{blomer2023density} also remarks that it is not clear how to combine their spectral methods with the $L$-function techniques of Luo--Rudnick--Sarnak \cite{luo1999generalized}.} the sharp cutoffs in \cref{eq:best-pointwise-bounds}.
In particular, the upper bound in \cref{thm:blomer-density} is always $q^{n - O(1)}$, even though \cref{eq:best-pointwise-bounds} implies that the set is empty for $\theta > \tfrac{1}{2} - \tfrac{1}{n^2+1}$.
Given the great difficulty of improving the pointwise bounds, one may hope for a density bound that decays smoothly as $\theta$ approaches $\frac{1}{2}$. This is a relevant range for applications where the pointwise bound \cref{eq:best-pointwise-bounds} is currently used \cite{topacogullari2018shifted,de2020niveau,drappeau2023one,wu2023fourth,lichtman2023primes} (note that the range $\theta \approx \frac{1}{2}$ for $\GL_5$ impacts the range $\theta \approx \tfrac{1}{8}$ for $\GL_2$ via symmetric-fourth-power lifts, as in \cite[Appendix 2]{kim2003functoriality}).

In this paper, we prove density theorems for the local parameters of cuspidal automorphic representations of $\GL_n(\A_\Q)$, by studying sums over $\pi$ and $\pi'$ of the Dirichlet coefficients of Rankin--Selberg $L$-functions $L(s, \pi \times \tilde\pi')$; a key new ingredient is a positivity property for such sums, given in \cref{prop:rs-nonnegative-definite}. We achieve a smoother decay towards the pointwise thresholds in \cref{eq:best-pointwise-bounds}, and thus improve results like \cref{thm:blomer-density} for large values of $\theta$. Our main result is the following.

\begin{theorem}\label{thm:density}
Let $v$ be a place of $\Q$, $\mfC \ge 1$, and $\mF \subset \mathfrak{F}_n$ such that all $\pi \in \mF$ are unramified at $v$ and have total conductors $\mfC_\pi \le \mfC$ (as defined in \cref{eq:total-conductor})\footnote{This ensures that $\mF$ is finite \cite{brumley2006effective}, since total conductors from \cref{eq:total-conductor} are at least as large as analytic conductors.}. Then for any $\eps > 0$ and any $\theta \in (0, \tfrac{1}{2})$,
\[
    \#\left\{ \pi \in \mF : \max_j |\Re\, \mu_{\pi,j}(v)| \ge \theta \right\}
     \ll_{n,v,\theta,\eps} \mfC^{n\frac{1-2\theta}{2\theta} + \eps}.
\]
In fact, one can replace the base $\mfC^n$ in the right-hand side with $\max_{\pi, \pi' \in \mF} \mfC_{\pi \times \tilde\pi'}^{1/2}$.
\end{theorem}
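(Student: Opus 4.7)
The plan is to mimic the Luo--Rudnick--Sarnak argument underlying the pointwise bound \cref{eq:best-pointwise-bounds} in a bilinear setting, by averaging over pairs $(\pi, \pi') \in \mF \times \mF$. Let $\mF_\theta := \{\pi \in \mF : \max_j |\Re \mu_{\pi,j}(v)| \ge \theta\}$ with $N_\theta := |\mF_\theta|$, and take $v$ finite unramified for concreteness (the Archimedean case is analogous). For each $\pi \in \mF_\theta$ there exists a Satake parameter $|\alpha_{\pi,j_0}(v)| \ge q_v^\theta$, so the generating series $\sum_K \lambda_{\pi\times\tilde\pi}(q_v^K) X^K = \prod_{i,j}(1 - \alpha_{\pi,i}(v)\bar\alpha_{\pi,j}(v) X)^{-1}$ has radius of convergence at most $q_v^{-2\theta}$. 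Combined with the Jacquet--Shalika non-negativity of Rankin--Selberg coefficients, this yields
\[
\lambda_{\pi\times\tilde\pi}(q_v^K) \gg q_v^{2K\theta}
\]
for an appropriate $K$ (or a smoothly weighted window thereof, to avoid phase cancellations).

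To exploit this globally I would set up the bilinear sum
\[
\mathcal Z := \sum_{\pi, \pi' \in \mF_\theta} \sum_n b_n \lambda_{\pi\times\tilde\pi'}(n),
\]
where $b_n$ is a smooth non-negative weight localized on $n = q_v^K m$ with $m \asymp Y$ and $(m, q_v) = 1$. By multiplicativity, each inner sum factors as $\lambda_{\pi\times\tilde\pi'}(q_v^K) \cdot \sum_m w(m/Y) \lambda_{\pi\times\tilde\pi'}(m)$, and the latter is evaluated by a Mellin contour integral involving the $v$-free Rankin--Selberg $L$-function $L^v(s, \pi\times\tilde\pi')$. Shifting the contour to $\Re s = \tfrac12$, by the no-twist assumption $L(s, \pi\times\tilde\pi')$ is entire for $\pi \neq \pi'$, so a simple pole at $s=1$ is picked up only on the diagonal. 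This produces a diagonal contribution of size $\asymp Y N_\theta q_v^{2K\theta} \cdot \mfC_\pi^{-\eps}$ (via Brumley's lower bound on $L(1, \text{Ad}\,\pi)$), while the remaining contour, combined with the convexity estimate $L(\tfrac12 + it, \pi\times\tilde\pi') \ll \mfC_{\pi\times\tilde\pi'}^{1/4+\eps}(1+|t|)^{O(1)}$ and the bound $|\lambda_{\pi\times\tilde\pi'}(q_v^K)| \ll q_v^{2K\theta + K\eps}$ for bad pairs, yields an off-diagonal error
\[
|\mathcal Z_{\text{off}}| \ll N_\theta^2 \cdot q_v^{2K\theta} \cdot Y^{1/2+\eps} \cdot \Bigl(\max_{\pi,\pi' \in \mF}\mfC_{\pi\times\tilde\pi'}\Bigr)^{1/4+\eps}.
\]

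Balancing the diagonal lower bound from the bad amplifier against this off-diagonal convexity error, and optimizing $Y$ and $K$, would yield $N_\theta \ll (\max_{\pi, \pi' \in \mF}\mfC_{\pi\times\tilde\pi'})^{(1-2\theta)/(4\theta)+\eps}$, which via $\max_{\pi,\pi'}\mfC_{\pi\times\tilde\pi'} \ll \mfC^{2n}$ recovers the stated density bound. The main obstacle is precisely this final balancing: the diagonal already contains a universal residue contribution of comparable order to the bad amplifier's lower bound, so one must either subtract it off cleanly or design the test function $b_n$ to cancel it, leaving only the ``bad'' signal; and obtaining the sharp exponent $(1-2\theta)/(4\theta)$ requires a delicate interplay between Jacquet--Shalika positivity at $v$, the multiplicativity of Rankin--Selberg coefficients, and the convexity of $L(s, \pi\times\tilde\pi')$ on the critical line.
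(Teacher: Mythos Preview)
Your bilinear setup is the right starting point, but two concrete gaps prevent it from closing, and both trace back to a missing structural input.

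First, the off-diagonal bound $|\lambda_{\pi\times\tilde\pi'}(q_v^K)| \ll q_v^{2K\theta+K\eps}$ is not justified and is in general false. For $\pi,\pi' \in \mF_\theta$ you only know $\theta_\pi,\theta_{\pi'} \ge \theta$, so the local Rankin--Selberg parameters $\mu_{\pi,i}+\bar\mu_{\pi',j}$ can have real part close to $1$; the best individual bound on $|a_{\pi\times\tilde\pi'}(q_v^K)|$ is essentially $q_v^{K(\theta_\pi+\theta_{\pi'})}$, which may be nearly $q_v^{K}$. This alone destroys the off-diagonal estimate you wrote down, and with it the exponent $(1-2\theta)/(4\theta)$.

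Second, and more fundamentally, your scheme never produces an upper bound on $\mathcal Z$ itself (or on its diagonal part) against which to play the lower bound; a diagonal lower bound and an off-diagonal upper bound do not by themselves constrain $N_\theta$. The obstacle you flag at the end (the universal residue at $s=1$ is the same size as the amplified signal) is precisely a symptom of this missing inequality.

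The paper's argument supplies the missing input: the matrix $(a_{\pi\times\tilde\pi'}(m))_{\pi,\pi'}$ is \emph{nonnegative definite} for every $m$ (\cref{prop:rs-nonnegative-definite}), not merely nonnegative on the diagonal. This resolves both issues at once. Since $\sum_{\pi,\pi'} w_\pi\bar w_{\pi'}\, a_{\pi\times\tilde\pi'}(m) \ge 0$ for all $m$, one may majorize the single term at $m = \ell = q_v^K$ by the full smoothed sum $\sum_m \Phi(m/\ell)\sum_{\pi,\pi'} w_\pi\bar w_{\pi'}\, a_{\pi\times\tilde\pi'}(m)$ without any loss of sign, and the latter is bounded by a contour shift (diagonal contributes $\ell$, off-diagonal contributes $\sqrt{\mfC_{RS}}$; see \cref{lem:rs-convexity,prop:rs-triple-sums}). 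The matching lower bound on the \emph{same} bilinear form comes from the explicit inequality
\[
    \sum_{\pi,\pi'} w_\pi\bar w_{\pi'}\, a_{\pi\times\tilde\pi'}(p^k) \;\ge\; \frac{1}{k}\left|\sum_{\pi} w_\pi \sum_{j} \alpha_{\pi,j}(p)^k\right|^2,
\]
combined with Tur\'an's power-sum theorem to handle the fact that different $\pi$ peak at different $k$. There is no factorization into a $q_v^K$-part and an $m$-part, and one never needs to bound $a_{\pi\times\tilde\pi'}(q_v^K)$ for $\pi\neq\pi'$ individually.
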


\begin{remark}
When $v < \infty$, \cref{thm:density} holds, with the same proof, for the Iwaniec--Sarnak notion of analytic conductors \cite{iwaniec2000perspectives} instead of our choice in \cref{eq:total-conductor}. In this case, it is worthwhile to compare the bound in \cref{thm:density} to the total number of representations that it can consider. Indeed, let $\mathfrak{F}_n(\mfC)$ denote the \emph{universal family} of all $\pi \in \mathfrak{F}_n$ with analytic conductors up to $\mfC$. Then we expect \cite{brumley2024counting} that
\[
    |\mathfrak{F}_n(\mfC)| \asymp_n \mfC^{n+1}.
\]
In fact, Brumley--Mili\'cevi\'c \cite{brumley2024counting} proved this asymptotic when the family is restricted to the spherical representations, which in particular shows $\mfC \ll_n |\mathfrak{F}_n(\mfC)|^{1/(n+1)}$. So for $v < \infty$, \cref{thm:density} implies
\[
    \#\{\pi \in \mathfrak{F}_n(\mfC) \text{ unramified at } v : \max_j |\Re\, \mu_{\pi,j}(v)| \ge \theta\}
    \ll_{n,v,\theta,\eps}
    |\mathfrak{F}_n(\mfC)|^{\frac{n}{n+1} \frac{1-2\theta}{2\theta} + \eps}.
\]
Here the upper bound is nontrivial, and the exceptional set has density zero within $\mathfrak{F}_n(\mfC)$, provided
\begin{equation} \label{eq:nontrivial-range}
    \theta > \frac{1}{4} - \frac{1}{8n+4}.
\end{equation}
\end{remark}

\begin{remark}
The more explicit dependency on $\max_{\pi, \pi' \in \mF} \mfC_{\pi \times \tilde\pi'}^{1/2}$ in \cref{thm:density} is advantageous for `close-knit' families (in the sense of \cite[\S 1.5]{petrow2023fourth}), which exhibit Rankin--Selberg conductor dropping. One such family is $\mF := \{\pi \otimes \chi: \chi \in \Xi\}$, where $\pi \in \mathfrak{F}_n$ and $\Xi$ contains all the primitive even Dirichlet characters of a large prime conductor $q$. Then $|\mF| \asymp q$, and applying \cref{thm:density} with $\theta := \max_j |\Re\, \mu_{\pi,j}(v)|$ implies that 
\[
    q \ll_{n,v,\theta,\eps} \left(\mfC_\pi^{2n} q^{n^2}\right)^{\frac{1-2\theta}{4\theta} + \eps}
    \qquad 
    \stackrel{q \to \infty}{\Longrightarrow} 
    \qquad 
    \theta \le \frac{1}{2} - \frac{1}{n^2+2}.
\]
This falls slightly short of the Luo--Rudnick--Sarnak \cite{luo1995selberg,luo1999generalized} bound $\theta \le \tfrac{1}{2} - \tfrac{1}{n^2+1}$, because \cref{thm:density} does not exploit an additional square-root cancellation over Dirichlet characters (available for this particular family) through Deligne's bound for hyper-Kloosterman sums, as in \cite{duke1990estimates,luo1995selberg,kim2003functoriality}. Nevertheless, one can incorporate character twists into our arguments, to obtain a statement which recovers both \cref{thm:density} and the Luo--Rudnick--Sarnak bound, and which is in fact equivalent to the union of these two results. This is given in \cref{prop:density-amplif}, a particular case of which reads
\[
    q \ll_{n,v,\theta,\eps} \left(\mfC_\pi^{2n} q^{n^2-1}\right)^{\frac{1-2\theta}{4\theta} + \eps}
    \qquad 
    \stackrel{q \to \infty}{\Longrightarrow} 
    \qquad 
    \theta \le \frac{1}{2} - \frac{1}{n^2+1}.
\]
\end{remark}

For special families, one can exploit additional conductor dropping using character twists at ramified places. This leads to an enhanced result for the family considered by Blomer \cite{blomer2023density} in \cref{thm:blomer-density}, with an exponent that vanishes at the Luo--Rudnick--Sarnak threshold $\theta = \tfrac{1}{2} - \tfrac{1}{n^2+1}$. Combining this with the conclusion of \cref{thm:density} for the same family, we obtain the following.

\begin{theorem} \label{thm:density-blomer-family}
Let $v$ be a place of $\Q$, $q \neq v$ be a prime, and $I \subset [0, \infty)$ be a fixed compact interval. Then for any $\eps > 0$ and $\theta \in (0, \tfrac{1}{2})$, one has
\begin{equation} \label{eq:density-blomer-family-1}
    \#\left\{\pi \in \mF_I(q) : \max_j |\Re\, \mu_{\pi,j}(v)| \ge \theta\right\}
    \ll_{n,v,I,\eps} 
    q^{\min\left(\frac{n-1}{\theta}\left(\frac{1}{2} - \theta\right), \frac{n^2+1}{2\theta}(\frac{1}{2} - \frac{1}{n^2+1} - \theta)\right)+\eps}.
\end{equation}
\end{theorem}

In particular, \cref{eq:density-blomer-family-1} implies that for any $\delta > 0$, there can only be $O_{n,v,I,\delta}(q^{O(n^2\delta)})$ Hecke--Maass forms $\pi \in \mF_I(q)$ which get `$\delta$-close' to the Luo--Rudnick--Sarnak threshold, in the sense that $\max_j |\Re\, \mu_{\pi,j}(v)| \ge \tfrac{1}{2} - \tfrac{1}{n^2+1} - \delta$. Our proof of \cref{eq:density-blomer-family-1} (with the second choice in the minimum) crucially uses the fact that all representations in $\mF_I(q)$ have the same arithmetic conductor $q$.

Notably, since all $\pi \in \mF_I(q)$ have total conductors $\asymp_I q$, \cref{thm:density} directly implies the bound 
\begin{align} \label{eq:density-blomer-family-2}
    \sum_{\substack{q \le Q \\ v \nmid q}} \#\left\{\pi \in \mF_I(q) : \max_j |\Re\, \mu_{\pi,j}(v)| \ge \theta\right\}
    \ll_{n,v,I,\eps} 
    Q^{\frac{n}{\theta}(\frac{1}{2}-\theta)+\eps},
\end{align}
for $Q \ge 1$, summing over positive integers $q \le Q$ (in fact, \cref{thm:density} allows for arbitrary central characters, i.e., for including all nebentypen $\chi \pmod{d}$, $d \mid q$). As a quick comparison, summing \cref{thm:blomer-density} over the primes $q \le Q$ gives an upper bound of $Q^{n-4\theta+o(1)}$; when $n \ge 3$, our \cref{eq:density-blomer-family-2} improves this bound in the range
\[
    n\frac{1-2\theta}{2\theta} < n - 4\theta 
    \qquad\quad 
    \iff 
    \qquad\quad
    \theta > \frac{n - \sqrt{(n-2)n}}{4},
\]
which approaches the range $\theta \ge \frac{1}{4}$ as $n \to \infty$, much like \cref{eq:nontrivial-range}. The barrier at $\tfrac{1}{4} (n - \sqrt{(n-2)n})$ is always below the pointwise threshold from \cref{eq:best-pointwise-bounds}, so \cref{eq:density-blomer-family-2} gives a new result for $n \ge 3$. 

When $n = 2$, one should first take the symmetric fourth power lifts \cite{kim2003functoriality} of the $\GL_2$ representations considered, and then apply \cref{thm:density} for the resulting family of $\GL_5$ representations. Currently, this fails to beat the existing density theorems \cite{humphries2018density} for $\theta$ below the $\tfrac{7}{64}$ threshold of Kim--Sarnak, even when summing over all levels $q \le Q$ and all nebentypen $\chi \pmod{d}$, $d \mid q$. However, it is possible that \cref{thm:density} may be improved using similar ideas, and it would be interesting to obtain new results in the $\GL_2$ setting this way. For applications, it would be most relevant to obtain a nontrivial density theorem with only one form per level, as below; such results seem inaccessible to spectral methods, which rely on trace formulae for individual congruence subgroups.

\begin{problem} \label{prob:avg-level}
    Let $v$ be a place of $\Q$. For $q \in \Z_+$, let $\theta_q := \max_{\pi,j} |\Re\, \mu_{\pi,j}(v)|$, where $\pi$ ranges over cuspidal automorphic representations of $\GL_2(\A_\Q)$, generated by $\Gamma_0(q)$-invariant Maass forms. Show that, for all $Q \ge 1$ and some explicit $\eps, \delta > 0$,
    \[
        \# \left\{q \le Q : \theta_q > \frac{7}{64} - \delta \right\} \ll_v Q^{1-\eps}.
    \]
\end{problem}

This would mean beating the Kim--Sarnak bounds \cite[Appendix 2]{kim2003functoriality} for almost all levels $q$.

Indeed, such a density theorem with averaging over the levels could be combined with fixed-level \emph{large sieve inequalities} for exceptional Maass forms, which arise in the dispersion method \cite{deshouillers1982kloosterman,drappeau2017sums,pascadi2026large} (these are similar to density theorems, but incorporate additional information about the orthogonality of coefficients, which is useful in bounding multilinear forms of Kloosterman sums)\footnote{There exist large sieve inequalities for exceptional Maass forms which use averaging over levels (see \cite[Theorems 6, 7]{deshouillers1982kloosterman}, \cite{watt1995kloosterman}), but the same techniques do not seem to apply to proving density theorems with averaging over levels.}. This would automatically improve several results which currently rely on the pointwise bounds of Kim--Sarnak  \cite{topacogullari2018shifted,de2020niveau,drappeau2023one,lichtman2023primes}. We note that for the results at finite places $v$, the dependency of the implied constant on $v$ can be important in applications.


\begin{remark}
Building on the techniques in \cite{luo1999generalized,blomer2011ramanujan}, it should be possible to extend our results to arbitrary number fields. Using explicit descriptions of ramified local parameters of Rankin--Selberg $L$-functions as in \cite{rudnick1996zeros}, it might also be possible to prove corresponding results at ramified places $v$.
\end{remark}

\subsection{General notation.}
We use the standard asymptotic notation $\ll$, $\asymp$, $o(x)$, $O(x)$, $\Omega(x)$, $\Theta(x)$ from analytic number theory, indicating dependencies of the implied constants through subscripts (e.g., $O_\eps(x)$). In particular, statements like $f(x) \ll x^{o(1)} g(x)$ should be read as $\forall \eps > 0,\, f(x) \ll_\eps x^\eps g(x)$. We may also use the symbols $\approx$ and $\lesssim$ for informal or approximate identities and inequalities. We write $m \sim M$ as shorthand for $M < m \le 2M$, $\one_S$ for the truth value ($0$ or $1$) of a statement $S$, $\|u\|_q$ for the $\ell^q$ norm of a sequence $(u_m)$ (with $q \in [1, \infty]$), and $\Z_+ =\{n\in\Z : n\ge1\}$ the positive integers. Throughout this paper, we fix an integer $n \ge 2$ (and let all implicit constants depend on it). See \cref{sec:preliminaries} for notation specific to automorphic representations and $L$-functions.

\section{Outline} \label{sec:outline}

This section gives a brief and informal outline of our method, for readers who are familiar with the properties of automorphic and Rankin--Selberg $L$-functions (see \cref{sec:preliminaries} for more background).

Let $v$ be a fixed place of $\Q$. First, consider the problem of bounding the local parameters $\mu_{\pi,j} = \mu_{\pi,j}(v)$ of a given $\pi \in \mathfrak{F}_n$, unramified at $v$, as in \cref{eq:best-pointwise-bounds}. Recall that these parameters appear in the local factors $L_v(s, \pi)$ and $L_v(s, \pi \times \tilde \pi)$. 

The ``trivial'' bound $|\Re\, \mu_{\pi,j}| \le \frac{1}{2}$ follows from the fact that the local factors $L_v(s, \pi \times \tilde\pi)$ have no poles in $\Re\, s > 1$ \cite{jacquet1981euler}. But there is another simple, global argument for this bound (which is somewhat redundant in this setting, but generalizes well): for any $\ell \ge 1$, one has
\begin{equation} \label{eq:simple-argument-finite}
    \lambda_{\pi \times \tilde \pi}(\ell) \le
    \sum_{m \sim \ell/2} \lambda_{\pi \times \tilde \pi}(m)
    \ll 
    \ell^{1+o(1)},
\end{equation}
by the nonnegativity of the Rankin--Selberg coefficients $a_{\pi \times \tilde \pi}(m)$ \cite{rudnick1996zeros} and the absolute convergence of the Dirichlet series of $L(s, \pi \times \tilde\pi)$ in $\Re\, s > 1$. When $v = p < \infty$, taking $\ell = p^k$, the coefficient $a_{\pi \times \tilde \pi}(\ell)$ grows (at least on a subsequence of $k$'s) like $\ell^{2 \max_j |\Re\, \mu_{\pi,j}(p)|}$; so letting $k\to \infty$, this gives a contradiction unless $|\Re\, \mu_{\pi,j}(p)| \le \tfrac{1}{2}$. Similarly, when $v = \infty$, one can instead bound
\begin{equation} \label{eq:simple-argument-infinite}
    \ell^\beta \lambda_{\pi \times \tilde \pi}(1) \le
    \sum_{m \le \ell} \lambda_{\pi \times \tilde \pi}(m) \left(\frac{\ell}{m}\right)^\beta 
    \ll 
    \ell^{1+o(1)},
\end{equation}
for $\beta \in \R$ such that $L(\beta, \pi \times \tilde\pi) = 0$, by shifting contours to $\Re\, s = 1+\eps$ (indeed, the zero of $L(s, \pi \times \tilde\pi)$ at $s = \beta$ cancels the pole of a Mellin-transformed smooth majorant; see \cref{lem:rs-convexity}). Taking $\beta = \mu_{\pi,j} + \bar \mu_{\pi,j}$, which must be a zero of $L(s, \pi \times \tilde\pi)$ to cancel the corresponding pole of $L_\infty(s, \pi \times \tilde\pi)$ (see \cref{eq:local-factors-rs,eq:local-params-unram-rs}), and letting $\ell \to \infty$, one recovers the bound $|\Re\, \mu_{\pi,j}(\infty)| \le \tfrac{1}{2}$.

One can improve this to $|\Re\, \mu_{\pi,j}| \le \frac{1}{2} - \frac{1}{n^2+1}$, as anticipated in \cref{eq:best-pointwise-bounds}, by considering a family of twisted $L$-functions. Indeed, the classical method of Landau--Serre \cite{serreletter} essentially uses twists by archimedean characters to achieve this bound at the finite places of $\Q$. Luo--Rudnick--Sarnak \cite{luo1995selberg} used twists by Dirichlet characters to obtain results of the same strength at the infinite place, and their method extends to general number fields \cite{luo1999generalized}. More details on the role of character twists in such results are given in \cref{subsec:amplif-unram}. 

When $n \le 4$, one can work with the symmetric square $\Sym^2 \pi$ instead of $\pi \times \tilde\pi$, which improves the bound to $|\Re\, \mu_{\pi,j}| \le \frac{1}{2} - \frac{1}{n(n+1)/2 + 1}$, using related ideas of Duke--Iwaniec \cite{duke1990estimates}. When $n = 2$, one can also combine these results with symmetric power lifts \cite[Appendix 2]{kim2003functoriality}. This explains \cref{eq:best-pointwise-bounds}.

Our proof of \cref{thm:density} uses neither character twists nor symmetric squares. Rather, we insert averaging over representations $\pi, \pi' \in \mF$ into the simpler argument from \cref{eq:simple-argument-finite,eq:simple-argument-infinite}, working with the Rankin--Selberg $L$-functions $L(s, \pi \times \tilde\pi')$. The \emph{diagonal terms} with $\pi = \pi'$ can be treated as before, but constitute only a $|\mF|^{-1}$-fraction of the total sum. In the \emph{off-diagonal terms} with $\pi \neq \pi'$, we obtain savings from the fact that $L(s, \pi \times \tilde\pi')$ has no poles -- so essentially, from the orthogonality of the coefficients of $L(s, \pi)$ and $L(s, \pi')$. This is reminiscent of the proof of mean-value estimates (large sieve inequalities) in \cite{duke2000problem,thorner2021unconditional,humphries2024zeros}.

Notably, the pointwise argument in \cref{eq:simple-argument-finite,eq:simple-argument-infinite} depended on the fact that the coefficients $a_{\pi \times \tilde\pi}(m)$ are nonnegative\footnote{Even when using $\Sym^2 \pi$ instead of $\pi \times \tilde\pi$, one deduces the absolute convergence of the Dirichlet series of $L(s, \Sym^2 \pi)$ in $\Re\, s > 1$ from that of $L(s, \pi \times \tilde\pi)$.}. Our argument uses, as a key input, a generalization of this fact: the Rankin--Selberg coefficients $a_{\pi \times \tilde\pi'}(m)$ form a \emph{positive semi-definite} matrix in $\pi, \pi' \in \mF$. In other words, for any weights $w_\pi \in \C$ and any positive integer $m$, one has
\begin{equation} \label{eq:sketch-nonneg-def}
    \sum_{\pi, \pi' \in \mF} w_\pi \bar w_{\pi'}\, \lambda_{\pi \times \tilde\pi'}(m) \ge 0,
\end{equation}
as we show in \cref{prop:rs-nonnegative-definite}.
This allows one to bound
\[
    \left\vert \sum_{m \le M} u_m \sum_{\pi, \pi' \in \mF} w_\pi \bar w_{\pi'}\, \lambda_{\pi \times \tilde\pi'}(m)\right\vert
    \le 
    \|u\|_\infty \sum_{m = 1}^\infty \Phi\left(\frac{m}{M}\right) \sum_{\pi, \pi' \in \mF} w_\pi \bar w_{\pi'}\, \lambda_{\pi \times \tilde\pi'}(m),
\]
for any complex sequence $(u_m)$ and a suitable smooth majorant $\Phi$; the sums $\sum_m \Phi(\frac{m}{M})\, \lambda_{\pi \times \tilde\pi'}(m)$ in the right-hand side can then be expressed in terms of the $L$-functions $L(s, \pi \times \tilde\pi')$. We consider such trilinear sums over $m, \pi, \pi'$, with additional weights of $(M/m)^{\beta_\pi + \bar\beta_{\pi'}}$, in \cref{prop:rs-triple-sums}. We ultimately apply this for the sequences $u_m = \one_{m = \ell}$, respectively $u_m = \one_{m = 1}$ (with $M = \ell$).

This argument leads to better bounds for averages like $\frac{1}{|\mF|^2}\sum_{\pi,\pi' \in \mF} w_\pi \bar w_{\pi'}\, \lambda_{\pi \times \tilde\pi'}(\ell)$, which can be exploited to produce density theorems: rather than seeking a contradiction when $\max_j |\Re\, \mu_{\pi,j}|$ is too large, we seek an upper bound for $|\mF|$ in terms of the smallest local parameter $\min_{\pi \in \mF} \max_j |\Re\, \mu_{\pi,j}|$ and the largest total conductor $\max_{\pi \in \mF} \mfC_\pi$. The conductor aspect is crucial in such results (unlike in the pointwise bounds from \cite{luo1995selberg,rudnick1996zeros,kim2003functoriality}), so we need to make all dependencies on it explicit; the convexity bounds of Li \cite{li2010upper} are essential here. Another difficulty is that we cannot simply let various parameters tend to $\infty$ as in \cite{luo1995selberg,rudnick1996zeros,kim2003functoriality}; we will need to carefully optimize such parameters. Thus for instance, in our results at finite places, concluding the argument requires a more explicit lower bound in \cref{eq:sketch-nonneg-def} when $m = p^k$, as well as an application of Tur\'an's lower bounds for power sums.

In \cref{sec:preliminaries}, we describe the formal setup for our methods by gathering various preliminaries from the literature.
In \cref{sec:bounds-rankin-selberg}, we study sums over $\pi, \pi' \in \mF$ of Rankin--Selberg coefficients and establish the aforementioned \cref{prop:rs-nonnegative-definite,prop:rs-triple-sums}, which may be of independent interest.
In \cref{subsec:non-archimedean,subsec:archimedean}, we carry out the argument described above, with averaging over $\pi, \pi' \in \mF$, to prove density theorems for the local parameters of $\GL_n$ automorphic representations at $v = p < \infty$ and then $v = \infty$. In \cref{subsec:amplif-unram}, we incorporate twists by Dirichlet characters at an unramified place into our arguments, which allows us to recover the pointwise bounds of Luo--Rudnick--Sarnak \cite{luo1995selberg}. In \cref{subsec:amplif-ram}, we execute a similar argument using twists at a ramified place, which exploits additional conductor dropping for the special family $\mF = \mF_I(q)$, leading to the proof of \cref{thm:density-blomer-family}.

\section{Preliminaries} \label{sec:preliminaries}

\subsection{Mellin transforms and the Gamma function}
Given a bounded smooth function $\Phi$ on $(0, \infty)$ with Schwartz decay towards $\infty$, we define its Mellin transform as
\[
    \tilde\Phi(s) := \int_0^\infty x^{s-1}\, \Phi(x)\, dx,
\]
in $\Re\, s > 0$. This function decays rapidly in vertical strips and satisfies the Mellin inversion formula
\begin{equation} \label{eq:mellin-inversion}
    \Phi(x) = \frac{1}{2\pi i} \int_{(\sigma)} x^{-s}\, \tilde\Phi(s)\, ds,
\end{equation}
for any $\sigma > 0$, where the integral is over the vertical line at $\Re\, s = \sigma$. We recall that the Gamma function is defined as the Mellin transform of $e^{-x}$,
\[
    \Gamma(s) := \int_0^\infty x^{s-1}\, e^{-x}\, dx,
\]
in $\Re\, s > 0$, and by meromorphic continuation otherwise. It can be estimated by Stirling's formula,
\[
    \log \Gamma(s) = 
    \left(s - \frac{1}{2}\right) \log s - s + \frac{\log (2\pi)}{2} + O_\eps(|s|^{-1}),
\]
valid in $|\Arg\, s| < \pi - \eps$.
For any $\sigma, t \in \R$ and $C, c > 0$ with $|\sigma| \le C$ and $|t| \ge c$, it follows that
\[
    \Gamma(\sigma + it)
    \asymp_{c,C}
    |t|^{\sigma - \frac{1}{2}} e^{-\frac{\pi}{2} |t|}.
\]

Since $\Gamma$ has no zeros, and poles only at the nonpositive integers, this automatically improves to
\begin{equation} \label{eq:gamma-explicit}
\begin{cases}
    \Gamma(\sigma + it) \gg (1+|t|)^{\sigma - \frac{1}{2}} e^{-\frac{\pi}{2}|t|},
    &
    \text{for } \sigma \ll 1,
    \\
    \Gamma(\sigma + it) \ll (1+|t|)^{\sigma - \frac{1}{2}} e^{-\frac{\pi}{2}|t|},
    &
    \text{for } \sigma \ll 1,\ \min_{m \in \Z_{\le 0}} |\sigma + it - m| \gg 1. 
\end{cases}
\end{equation}
It will also be convenient to use the common notation
\[
    \Gamma_\R(s) := \pi^{-s/2}\, \Gamma(s/2),
\]
for factors that appear in the functional equations of $L$-functions. From \cref{eq:gamma-explicit}, it follows that for any complex numbers $s, z$ with $\Re\, s \ll 1$, $\Re\, z \ll 1$, and $\Re (1-s+z) \ge \eps > 0$, one has
\begin{equation} \label{eq:gamma-quotients}
    \frac{\Gamma_\R(1 - s + \bar z)}{\Gamma_\R(s + z)}
    \ll_\eps (1 + |\Im(s+z)|)^{\frac{1}{2} - \Re\, s}.
\end{equation}

\subsection{Automorphic $L$-functions} \label{subsec:automorphic}
We refer the reader to the books of Bump \cite{bump1998automorphic} and Goldfeld--Hundley \cite{goldfeld2011automorphic,goldfeld2011automorphicII} for an introduction to automorphic representations and their $L$-functions. Here we follow the setup in \cite{rudnick1996zeros,luo1995selberg,kim2003functoriality}, up to some changes in notation; see the work of Jacquet \cite{jacquet1979principal} and Godement--Jacquet \cite{godement1972zeta} for details.

We recall that we write $\mathfrak{F}_n$ for the family of cuspidal automorphic representations $\pi$ of $\GL_n(\A_\Q)$, normalized to have central characters $\omega_\pi$ on $\Q^\times \backslash \A_\Q^\times$ that are unitary \emph{and} trivial on the diagonally-embedded positive reals $(0, \infty) \subset \A_\Q^\times$. 
For later convenience, throughout this subsection, we let $\pi$ be any cuspidal automorphic representation of $\GL_n(\A_\Q)$ with unitary central character; thus $\pi$ could be any archimedean twist of a representation in $\mathfrak{F}_n$.

The representation $\pi$ has an irreducible unitary generic local component $\pi_v$ at each place $v$ (which could be $\infty$ or a prime $p$), finitely many of which may be ramified. At all places, ramified or not, the local factors $L_v(s, \pi)$ are of the form
\begin{equation} \label{eq:local-factors}
    L_v(s, \pi) =
    \begin{cases} 
    \prod_{j=1}^n \left(1 - \alpha_{\pi,j}(p)\, p^{-s} \right)^{-1},
    & v = p < \infty, \\
    \prod_{j=1}^n \Gamma_\R(s - \mu_{\pi,j}(\infty)),
    & v = \infty.
    \end{cases}
\end{equation}
To bring the finite and infinite places on an equal footing, for $v = p < \infty$ we denote 
\[
    p^{\mu_{\pi,j}(p)} := \alpha_{\pi,j}(p),
\] 
where $\Im\ \mu_{\pi,j}(p)$ is only determined modulo $\tfrac{2\pi}{\log p}$.
At the ramified primes we may have $\alpha_{\pi,j}(p) = 0$, case in which $\mu_{\pi,j}(p) = -\infty$.  The central character $\omega_\pi$ determines the sums $\sum_{j=1}^n \mu_{\pi,j}(v)$. 

At all places, the Satake/Langlands parameters $\mu_{\pi,j}$ satisfy \cite{jacquet1981euler}
\begin{equation} \label{eq:local-params-bound}
    \Re\, \mu_{\pi,j} \; < \; \frac{1}{2},
\end{equation}
which follows, remarkably, from purely local considerations \cite[(12), (13)]{sarnak2005notes}; 
see also \cite[(2.2), (2.5)]{rudnick1996zeros}. At the unramified places, one can insert absolute values in \cref{eq:local-params-bound}, and the unitarity assumption gives
\begin{equation} \label{eq:unitary-local-parameters}
    \left\{\mu_{\pi,j} : j \le n \right\}
    =
    \left\{-\bar \mu_{\pi,j} : j \le n \right\}.
\end{equation}
while the GRC asserts
\[
    \Re\, \mu_{\pi,j} = 0.
\]
For $v = \infty$ and $n = 2$, this corresponds to Selberg's eigenvalue conjecture \cite{selberg1965estimation,luo1995selberg}. 

The \emph{contragredient} $\tilde\pi$ is a unitary cuspidal automorphic representation of $\GL_n(\A_\Q)$, with local parameters given at every place by
\[
    \mu_{\tilde\pi,j} = \bar\mu_{\pi,j}.
\]
The global $L$-functions are then defined, in $\Re\, s > 1$, by
\[
    L(s, \pi) := \prod_{v = p < \infty} L_p(s, \pi) = \sum_{m = 1}^\infty \frac{\lambda_\pi(m)}{m^s},
    \qquad\qquad 
    \Lambda(s, \pi) := L_\infty(s, \pi)\, L(s, \pi),
\]
where $a_{\tilde\pi}(m) = \bar \lambda_\pi(m)$. The completed $L$-function $\Lambda(s, \pi)$ can be continued to an entire function \cite{godement1972zeta,jacquet1979principal}, which satisfies a functional equation of the shape
\begin{equation} \label{eq:functional-eqn}
    \Lambda(s, \pi) = 
    \epsilon(s, \pi)\,
    \Lambda(1-s, \tilde \pi).
\end{equation}
Here the epsilon factor is given by
\begin{equation} \label{eq:epsilon-factor}
     \epsilon(s, \pi) = \tau_\pi\, \mf_\pi^{-s},
\end{equation}
where $\mf_\pi$ is the \emph{(arithmetic) conductor} of $\pi$, and $\tau_\pi$ is a complex number with 
\begin{equation} \label{eq:gauss-sum}
    |\tau_\pi| = \sqrt{\mf_\pi}.
\end{equation} 
These satisfy $\mf_{\tilde \pi} = \mf_{\pi}$, $\tau_{\tilde \pi} = \bar \tau_{\pi}$, and one can factor
\[
    \epsilon(s, \pi) = \prod_v \epsilon_v(s, \pi), 
    \quad\quad 
    \epsilon_v(s, \pi) = \tau_{\pi}(v)\, \mf_{\pi \times \tilde\pi'}(v)^{-s},
    \quad\quad
    |\tau_{\pi}(v)| = \sqrt{\mf_{\pi}(v)},
\]
where $\mf_{\pi}(\infty) = 1$, $\mf_{\pi}(p)$ is a power of $p$, and $\epsilon_v(s, \pi) = 1$ when $v$ is unramified for $\pi$. The local factors $\epsilon_v(s, \pi)$ really depend on an implicit choice of an everywhere-normalized additive character $\psi = \prod_v \psi_v$ for $\A_\Q/\Q$, but we suppress this dependency in our notation.

Incorporating the contribution of the archimedean factor, we define\footnote{For technical reasons arising in the proof of \cref{lem:rs-convexity} with condition $(ii)$, our definition of $\mfC_\pi$ differs slightly from the standard \emph{analytic conductor} \cite{iwaniec2000perspectives,li2010upper,soundararajan2019weak}, which takes a product of factors $(1 + |\mu_{\pi,j}(\infty)|)$ rather than $n$ copies of the maximal factor. In particular, \cref{eq:total-conductor} gives a slightly larger value than the analytic conductor, so bounds like $F(\pi) \ll_\eps \mfC_\pi^\eps$ in \cite{li2010upper} remain true with our notation. However, for our results concerning the GRC at finite places $v$, one can use the analytic (rather than total) conductors, through condition $(i)$ of \cref{lem:rs-convexity}.} the \emph{total conductor} of $\pi$ by 
\begin{equation} \label{eq:total-conductor}
    \mfC_\pi := \mf_\pi \left(1 + \max_{1 \le j \le n} |\mu_{\pi,j}(\infty)|\right)^n.
\end{equation}

Then for $\Re\, s \in (-C,\frac{1}{2}-\eps]$ and $\eps,C > 0$, it follows from \cref{eq:gamma-quotients,eq:epsilon-factor,eq:local-params-bound,eq:gauss-sum,eq:functional-eqn,eq:local-factors,eq:total-conductor} that
\begin{equation} \label{eq:infty-quotients} 
    \frac{L(s, \pi)}{L(1-s, \tilde\pi)}
    =
    \epsilon(s, \pi) \frac{L_\infty(1-s, \tilde\pi)}{L_\infty(s, \pi)} \ll_{\eps} (1 + |s|)^{O(C)}\,
    \mfC_\pi^{\frac{1}{2} - \Re\, s}.
\end{equation}
One can \emph{twist} $\pi$ by any Hecke character of the idele class group $\Q^\times \backslash \A_\Q^\times$ (i.e., by an automorphic form on $\GL_1(\Q) \backslash \GL_1(\A_\Q)$), to obtain another automorphic representation $\pi \otimes \chi$ \cite[p.\,305]{bump1998automorphic}; this multiplies the underlying automorphic forms on $\GL_n(\Q) \backslash \GL_n(\A_\Q)$ by $\chi(\det(\cdot))$. If $\pi$ has central character $\omega_\pi$, then $\pi \otimes \chi$ has central character $\chi^n \omega_\pi$, so in particular $\pi \otimes \chi$ remains unitary if $\chi$ is unitary. As one would expect, the contragredient of $\pi \otimes \chi$ is $\tilde\pi \otimes \bar\chi$.

Following \cite[Appendix]{rudnick1996zeros}, when $\chi = |\cdot|^z$ for some $z \in \C$, we may also write
\[
    \pi[z] := \pi \otimes |\cdot|^z.
\] 
The archimedean twists by $|\cdot|^{it}$, for $t \in \R$, affect the $L$-function by $L(s, \pi[it]) = L(s + it, \pi)$; note that $\mathfrak{F}_n$ contains exactly one representative from each family of such archimedean twists. On the other hand, when $\chi$ is induced by a primitive even Dirichlet character of prime conductor $q \nmid \mf_\pi$ (as in \cite[Definition 2.1.7]{goldfeld2011automorphic}), we have $\pi \otimes \chi \in \mathfrak{F}_n$, and the twist affects the $L$-function by \cite{luo1995selberg}
\[
    L(s, \pi \otimes \chi) = \sum_{m = 1}^\infty \frac{\lambda_\pi(m) \chi(m)}{m^s},
    \qquad\qquad 
    L_\infty(s, \pi \otimes \chi) = L_\infty(s, \pi),
\]
and the epsilon factor by
\begin{equation} \label{eq:twisted-epsilon}
    \mf_{\pi \otimes \chi} = q^n\, \mf_\pi,
    \qquad\qquad 
    \tau_{\pi \otimes \chi} = 
    \tau_\chi^n\, \chi(\mf_\pi) \tau_\pi,
\end{equation}
where the classical Gauss sum of a primitive Dirichlet character is 
\begin{equation} \label{eq:gauss-sum-characters}
    \tau_\chi := \sum_{m = 1}^q \chi(m)\, e\left(\frac{m}{q}\right),
    \qquad\qquad 
    |\tau_\chi| = \sqrt{q}.
\end{equation}

We briefly mention how the local parameters of automorphic representations relate to the Laplacian and Hecke eigenvalues of automorphic forms. Let $q$ be a positive integer, and $\Gamma_0(q) \subset \SL_n(\Z)$ be the congruence subgroup of matrices with bottom row congruent to $(0, \ldots, 0, *)$ mod $q$. If $\pi$ is induced by a Maass form $F$ for $\Gamma_0(q)$ (which is a Hecke eigenform), we have the following:
\begin{itemize} 
    \item At $v = \infty$, the local parameters of $\pi$ correspond to the \emph{Laplacian eigenvalue} $\lambda_F(\infty)$ by \cite[p.\,49, pp.\,185-6]{terras1988harmonic}
    \[
    \lambda_F(\infty) = \frac{n^3-n}{24} - \frac{1}{2} \sum_{j=1}^n \mu_{\pi,j}(\infty)^2.
    \]
    Keeping $n$ fixed, this implies that one has $\lambda_F(\infty) \ll 1$ iff for each $j$, $\mu_{\pi,j}(\infty) \ll 1$ (since the real parts always satisfy $\Re\, \mu_{\pi,j}(\infty) \ll 1$).
    \item At the unramified primes $v = p < \infty$, the local parameters of $\pi$ correspond to the (appropriately normalized) $p^{th}$ \emph{Hecke eigenvalue} $\lambda_F(p)$ by
    \[
        \lambda_F(p) = \lambda_\pi(p) = \sum_{j=1}^n p^{\mu_{\pi,j}(p)}.
    \]
    \item The ramified primes $v = p < \infty$, and in fact the conductor $\mf_\pi$, divide the level $q$ \cite{jacquet1981conducteur,blomer2023density}. Thus the total conductor $\mfC_\pi$ encodes the growth of both the level and the Laplacian eigenvalue. If $\lambda_F(\infty) \ll 1$, it follows that $\mfC_\pi \ll q$.
    \item The central character $\omega_\pi$ is an adelization of the \emph{nebentypus} $\chi_F$ (indicating how $F$ transforms under the action of $\Gamma_0(q)$; this is taken to be trivial in \cref{thm:blomer-density}).
\end{itemize}
In particular, when $n = 2$, the Laplacian and Hecke eigenvalues and the nebentypus of $F$ uniquely determine the local parameters of $\pi$ at the unramified places (up to permutation).

\subsection{Rankin--Selberg $L$-functions} \label{subsec:rs}

Thoughout this subsection, we let $\pi, \pi'$ be cuspidal automorphic representations of $\GL_n(\A_\Q)$ with unitary central characters. If the Langlands conjectures are true, there should be a unitary automorphic representation $\pi \boxtimes \pi'$ of $\GL_{n^2}(\A_\Q)$. Unconditionally, we can still associate to the pair $(\pi, \pi')$ an $L$-function $L(s, \pi \times \pi')$, through the theory of Rankin--Selberg $L$-functions. This was developed by Jacquet, Piatetski-Shapiro and Shalika \cite{jacquet1983rankin}, Shahidi \cite{shahidi1981certain}, and Mœglin--Waldspurger \cite{moeglin1989spectre}; we also refer the reader to \cite{brumley2006effective,luo1995selberg,rudnick1996zeros,duke2000problem} for exposition, basic properties, and some applications.

Most properties of automorphic $L$-functions from the previous subsection have counterparts for Rankin--Selberg $L$-functions.
For later convenience, we will describe the $L$-function $L(s, \pi \times \tilde \pi')$ rather than $L(s, \pi \times \pi')$; the contragredient will help emphasize the `diagonal terms' with $\pi \simeq \pi'$. The finitely many ramified places of the pair $\pi \times \tilde \pi'$ are among the ramified places of $\pi$ or $\pi'$. At every place, ramified or not, the local factors have the form
\begin{equation} \label{eq:local-factors-rs}
    L_v(s, \pi \times \tilde \pi') = 
    \begin{cases} 
    \prod_{j = 1}^n \prod_{j' = 1}^n \left(1 - p^{-(s-\mu_{\pi \times \tilde \pi',j,j'}(p))} \right)^{-1},
    & v = p < \infty, \\
    \prod_{j = 1}^n \prod_{j' = 1}^n \Gamma_\R(s - \mu_{\pi \times \tilde \pi',j,j'}(\infty)),
    & v = \infty,
    \end{cases}
\end{equation}
where for $v = p < \infty$, one has $p^{\mu_{\pi \times \tilde \pi',j,j'}(p)} = \alpha_{\pi \times \tilde \pi',j,j'}(p)$, and $|\Im\, \mu_{\pi \times \tilde \pi',j,j'}(p)| \ll (\log p)^{-1}$. All local parameters at every place $v$ satisfy
\begin{equation} \label{eq:local-params-bound-rs}
    \Re\, \mu_{\pi \times \tilde\pi',j,j'} < 1.
\end{equation}
In fact, at the unramified places $v$ of both $\pi$ and $\pi'$, the local factors are explicitly given by
\begin{equation} \label{eq:local-params-unram-rs}
    \mu_{\pi \times \tilde \pi',j,j'}
    =
    \mu_{\pi,j} + \bar \mu_{\pi',j'},
\end{equation}
and the reader may compare \cref{eq:local-params-bound} with \cref{eq:local-params-bound-rs} in light of \cref{eq:local-params-unram-rs}. The global $L$-functions are then defined by
\[
\begin{aligned}
    L(s, \pi \times \tilde \pi') &:= \prod_{v = p < \infty} L_p(s, \pi \times \tilde \pi') = \sum_{m = 1}^\infty \frac{\lambda_{\pi \times \tilde \pi'}(m)}{m^s},
    \\
    \Lambda(s, \pi \times \tilde\pi') &:= L_\infty(s, \pi \times \tilde \pi')\, L(s, \pi \times \tilde \pi'),
\end{aligned}
\]
which converge absolutely in $\Re\, s > 1$; here the Dirichlet coefficients satisfy $a_{\tilde\pi \times \pi'}(m) = \bar \lambda_{\pi \times \tilde \pi'}(m)$.
The completed $L$-function can be continued to a meromorphic function on $\C$ such that \cite{jacquet1983rankin,moeglin1989spectre,brumley2006effective}
\[
    \Lambda(s, \pi \times \tilde \pi')\
    \begin{cases}
    \text{has (simple) poles only at $it$ and $1+it$},
    & \text{if } \exists t \in \R :  \pi[it] = \pi',
    \\
    \text{is entire}, 
    & \text{otherwise},
    \end{cases}
\]
and satisfies a functional equation (see also \cite[(2.3)]{luo1995selberg})
\begin{equation} \label{eq:functional-eqn-rs}
    \Lambda(s, \pi \times \tilde \pi') = 
    \epsilon(s, \pi \times \tilde \pi')\,
    \Lambda(1-s, \tilde \pi \times \pi').
\end{equation}
Here the epsilon factor is given by
\begin{equation} \label{eq:epsilon-factor-rs}
     \epsilon(s, \pi \times \tilde \pi') = \tau_{\pi \times \tilde \pi'}\, \mf_{\pi \times \tilde\pi'}^{-s},
\end{equation}
where $\mf_{\pi \times \tilde\pi'}$ is the (arithmetic) \emph{conductor} of $\pi \times \tilde\pi'$, and $\tau_{\pi \times \tilde\pi'}$ is a complex number with 
\begin{equation} \label{eq:gauss-sum-rs}
    |\tau_{\pi \times \tilde \pi'}| = \sqrt{\mf_{\pi \times \tilde\pi'}}.
\end{equation} 
As before, these satisfy $\mf_{\tilde \pi \times \pi'} = \mf_{\pi \times \tilde\pi'}$, $\tau_{\tilde \pi \times \pi'} = \bar \tau_{\pi \times \tilde \pi'}$, and one can factor
\[
    \epsilon(s, \pi \times \tilde\pi') = \prod_v \epsilon_v(s, \pi \times \tilde\pi'), 
    \quad\quad 
    \epsilon_v(s, \pi \times \tilde\pi') = \tau_{\pi \times \tilde\pi'}(v)\, \mf_{\pi \times \tilde\pi'}(v)^{-s},
    \quad\quad
    |\tau_{\pi \times \tilde\pi'}(v)| = \sqrt{\mf_{\pi \times \tilde\pi'}(v)},
\]
where $\mf_{\pi \times \tilde\pi'}(\infty) = 1$, and $\mf_{\pi \times \tilde\pi'}(p)$ is a power of $p$, and $\epsilon_v(s, \pi \times \tilde\pi') = 1$ when $v$ is unramified for $\pi \times \tilde\pi'$ (in particular, the primes dividing $\mf_{\pi \times \tilde\pi'}$ also divide $\mf_\pi\, \mf_{\pi'}$). Once again, the local factors $\epsilon_v(s, \pi \times \tilde\pi')$ implicitly depend on an everywhere-normalized additive character $\psi = \prod_v \psi_v$ for $\A_\Q/\Q$.

By Bushnell--Henniart \cite{bushnell1997upper} (see also \cite[p.\,22]{duke2000problem}), we have the bound
\begin{equation} \label{eq:conductor-bound-rs}
    \mf_{\pi \times \tilde \pi'} \le (\mf_\pi\, \mf_{\pi'})^n. 
\end{equation}
Like in the automorphic case, it is convenient to account for the contribution of the archimedean factor through the \emph{total conductor}
\begin{equation} \label{eq:total-conductor-rs}
    \mfC_{\pi \times \tilde \pi'} := \mf_{\pi \times \tilde \pi'} \left(1 + \max_{1 \le j,j'\le n}|\mu_{\pi \times \pi',j,j'}(\infty)|\right)^{n^2}.
\end{equation}
In particular, when $\pi$ and $\pi'$ are unramified at $\infty$, it is easily seen from \cref{eq:total-conductor,eq:total-conductor-rs,eq:conductor-bound-rs} that
\begin{equation} \label{eq:total-conductor-bound}
    \mfC_{\pi \times \tilde\pi'} \ll  \left(\mfC_\pi\, \mfC_{\pi'}\right)^n.
\end{equation}
In fact, the same holds true in the general case, which follows from (a simpler version of) the computations in \cite[Lemma 2.1]{soundararajan2019weak}; see also \cite[Lemma A.2]{humphries2019standard}.

In analogy with \cref{eq:infty-quotients}, for $\Re\, s \in (-C,-\eps]$ and $\eps, C > 0$, it follows from \cref{eq:gamma-quotients,eq:epsilon-factor-rs,eq:local-params-bound-rs,eq:gauss-sum-rs,eq:functional-eqn-rs,eq:local-factors-rs,eq:total-conductor-rs} that
\begin{equation} \label{eq:infty-quotients-rs} 
    \frac{L(s, \pi \times \tilde \pi')}{L(1-s, \tilde\pi \times \pi')}
    =
    \epsilon(s, \pi \times \tilde\pi') \frac{L_\infty(1-s, \tilde \pi \times \pi')}{L_\infty(s, \pi \times \tilde \pi')} \ll_{\eps} (1 + |s|)^{O(C)}\,
    \mfC_{\pi \times \tilde\pi'}^{\frac{1}{2} - \Re\, s}.
\end{equation}

We can also effectively \emph{twist} a Rankin--Selberg $L$-function by a Hecke character, by twisting one of the original two representations. For twists by $|\cdot|^{it}$, this affects the $L$-function by $L(s, \pi[it] \times \tilde\pi') = L(s + it, \pi \times \tilde\pi')$. The twists by Dirichlet characters\footnote{Note that we reserve the notation $\otimes$ for twists, and $\times$ for Rankin--Selberg $L$-functions.} work similarly, as shown below.

\begin{lemma}[Rankin--Selberg $L$-functions with unramified twists] \label{lem:twisted-epsilon-rs}
Let $\chi$ be a primitive, even Dirichlet character of prime conductor $q \nmid \mf(\pi) \mf(\pi')$. Then one has
\begin{equation} \label{eq:twisted-l-fn-rs}
    L(s, (\pi \otimes \chi) \times \tilde \pi') = \sum_{m = 1}^\infty \frac{\lambda_{\pi \times \tilde\pi'}(m) \chi(m)}{m^s},
    \qquad\quad 
    L_\infty(s, (\pi \otimes \chi) \times \tilde\pi') = L_\infty(s, \pi \times \tilde \pi').
\end{equation}
Moreover, there exist complex numbers $\eta_\pi(q), \bar\eta_{\pi'}(q)$ of absolute value $1$, depending only on $\pi$ and $q$ (not on $\chi$), such that
\begin{equation}\label{eq:twisted-epsilon-rs}
    \mf_{(\pi \otimes \chi) \times \tilde\pi'} = q^{n^2} \mf_{\pi \times \tilde\pi'},
    \qquad\qquad 
    \tau_{(\pi \otimes \chi) \times \tilde\pi'} = 
    \eta_\pi(q)\, \bar \eta_{\pi'}(q)\, \tau_\chi^{n^2} \chi(\mf_{\pi \times \tilde\pi'})\, \tau_{\pi \times \tilde\pi'},
\end{equation}
\end{lemma}
\begin{proof}
This follows with very minor modifications from the proof of \cite[Lemma 2.1]{luo1995selberg}, which considers the case $\pi = \pi'$. To establish \cref{eq:twisted-l-fn-rs,eq:twisted-epsilon-rs}, it suffices to compute the local factors $L_v(s, (\pi \otimes \chi) \times \tilde\pi')$, respectively $\epsilon_v(s, (\pi \otimes \chi) \times \tilde\pi')$. At $v = \infty$, even characters $\chi$ have a trivial component, so we have
\begin{equation} \label{eq:unram-infty-local}
    L_\infty(s, (\pi \otimes \chi) \times \tilde\pi') = L_\infty(s, \pi \times \tilde\pi'),
    \qquad 
    \qquad 
    \epsilon_\infty(s, (\pi \otimes \chi) \times \tilde\pi') = \epsilon_\infty(s, \pi \times \tilde\pi').
\end{equation}
If $v = p \neq q$, so $p$ is unramified for $\chi$, then we find just like in \cite[(2.14)]{luo1995selberg} that
\begin{equation} \label{eq:unram-prime-local}
\begin{aligned}
    L_p(s, (\pi \otimes \chi) \times \tilde\pi') &= 
    \prod_{j=1}^n \prod_{j'=1}^n \left(1 - \chi(p) p^{-(s-\mu_{\pi\times\tilde\pi',j,j'}(p))}\right)^{-1},
    \\
    \epsilon_p(s, (\pi \otimes \chi) \times \tilde\pi') &=
    \chi\left(\mf_{\pi \times \tilde \pi'}(p)\right)
    \epsilon_p(s, \pi \times \tilde\pi').
\end{aligned}
\end{equation}

If $v = q$, which is unramified for both $\pi$ and $\pi'$, then we find as in \cite[p.\,393--394]{luo1995selberg} that
\[
    L_q(s, (\pi \otimes \chi) \times \tilde\pi')
    =
    \prod_{j=1}^n \prod_{j'=1}^n \left(1 - \chi(q) q^{-(s-\mu_{\pi,j}(q) + \mu_{\pi',j'}(q))}\right)^{-1}
    =
    1
\]
and
\[
\begin{aligned}
    \epsilon_q(s, (\pi \otimes \chi) \times \tilde\pi') 
    &= 
    \prod_{j=1}^n \prod_{j' = 1}^n 
    \epsilon_q(s - \mu_{\pi,j}(q) + \mu_{\pi',j'}(q), \chi)
    \\
    &=
    \prod_{j=1}^n \prod_{j' = 1}^n 
    \tau_\chi\, q^{- s + \mu_{\pi,j}(q) - \mu_{\pi',j}(q)}
    \\
    &= 
    \eta_\pi(q)\, \bar\eta_{\pi'}(q)\, \tau_\chi^{n^2} \, q^{-n^2 s}, 
\end{aligned}
\]
where
\[
    \eta_\pi(q) := q^{\sum_{j=1}^n \mu_{\pi,j}(q)}
\]
is a complex number of absolute value $1$ by unitarity (the exponent is imaginary by \cref{eq:unitary-local-parameters}).
Taking a product over all places recovers \cref{eq:twisted-l-fn-rs,eq:twisted-epsilon-rs}.
\end{proof}

\begin{lemma}[Rankin--Selberg $L$-functions with ramified twists] \label{lem:twisted-epsilon-rs-ram}
Let $\chi$ be a primitive, even Dirichlet character of prime conductor $q$. Suppose $\pi, \pi'$ have arithmetic conductors $\mf_\pi = \mf_{\pi'} = q$ and trivial central characters. Then one has
\begin{equation} \label{eq:twisted-l-fn-rs-ram}
    L(s, (\pi \otimes \chi) \times \tilde \pi') = \sum_{m = 1}^\infty \frac{\lambda_{\pi \times \tilde\pi'}(m) \chi(m)}{m^s},
    \qquad\quad 
    L_\infty(s, (\pi \otimes \chi) \times \tilde\pi') = L_\infty(s, \pi \times \tilde \pi'),
\end{equation}
\begin{equation}\label{eq:twisted-epsilon-rs-ram}
    \mf_{(\pi \otimes \chi) \times \tilde\pi'} = q^{n^2},
    \qquad\qquad 
    \tau_{(\pi \otimes \chi) \times \tilde\pi'} =
    \tau_\chi^{n^2}\tau_{\pi \times \tilde\pi'}(\infty).
\end{equation}
\end{lemma}

\begin{proof}
As before, we can work locally. At $v = \infty$ and $v = p \neq q$, the same computations as in \cref{eq:unram-infty-local,eq:unram-prime-local} apply (and we now have $\mf_{\pi \times \tilde\pi'}(p) = 1$).

The only interesting place is $v = q$. We leave to \cref{subsec:twist-ram-chars} the (standard but quite technical) local computation that
\begin{equation} \label{eq:ram-twist-local}
    L_q(s, (\pi \otimes \chi) \times \tilde\pi') = 1, 
    \qquad
    \qquad 
    \epsilon_q(s, (\pi \otimes \chi) \times \tilde\pi') = \tau_\chi^{n^2} q^{-n^2s}.
\end{equation}
Then, taking a product over all places $v$ completes the proof.
\end{proof}

\section{Families of Rankin--Selberg \texorpdfstring{$L$}{L}-functions} \label{sec:bounds-rankin-selberg}

\subsection{Positive semi-definite coefficients of $L$-functions} \label{subsec:nonnegative-definite}

As discussed in \cref{sec:outline}, it is often a helpful property that the Dirichlet coefficients of a given $L$-function are nonnegative.
When dealing with families of $L$-functions, it is desirable to generalize this property to produce nonnegative averages of coefficients. A natural way to proceed is to consider positive semi-definite matrices.

Recall that a Hermitian matrix $M=(M_{i,j})\in \C^{N\times N}$ is {\it positive semi-definite} if and only if all eigenvalues of $M$ are nonnegative, equivalently $\vec{v}^{\,*}M \vec{v}\in \R_{\ge0}$ for all vectors $\vec{v}\in \C^N$.

\begin{definition}[Positive semi-definite families] \label{def:nonneg-def}
Let $\mI$ be a finite ordered set. For $i, j \in \mI$, let $L_{i,j}(s) = \sum_{m = 1}^\infty \lambda_{i,j}(m)\, m^{-s}$ be a formal Dirichlet series with complex coefficients.
We say that the family $(L_{i,j}(s))_{i,j \in \mI}$ is \emph{positive semi-definite} if and only if for any $m \ge 1$, the matrix $M \in \C^{\mI \times \mI}$ with entries
\[
    M_{i, j} := \lambda_{i, j}(m)
\]
is (Hermitian and) positive semi-definite. Note that this is independent of the ordering of the sequence $\mI$. When applied to complex $L$-functions, this definition refers to their Dirichlet expansions in $\Re\, s > \sigma$, for large enough $\sigma$.
\end{definition}

Recall that a matrix is positive semi-definite if and only if it may be written as a sum of rank-1 matrices of the form $\vec{w} \vec{w}^*$, e.g., via its eigendecomposition (here $\vec{w}^*$ denotes the conjugate transpose of a complex vector $\vec{w}$). It follows that $(L_{i,j}(s))_{i,j \in \mI}$ is a positive semi-definite family of $L$-functions if and only if it can be expressed as a sum of the shape
\begin{equation} \label{eq:positive-linear-combinations}
    L_{i,j}(s) = 
    \sum_{k=1}^\infty w_i(k) \bar w_j(k) m(k)^{-s},
    \qquad\qquad 
    w_i(k) \in \C, \quad m(k) \in \Z_+
\end{equation}
where $m(k)$ takes any given value $m$ only finitely-many times.

\begin{lemma}[$L$-function operations preserve positive semi-definiteness] \label{lem:properties-nonnegative-definite}
Let $\mI$ be a finite ordered set and $c \ge 0$; let $(L_{i,j}(s))_{i,j \in \mI}$ and $(L^{(k)}_{i,j}(s))_{i,j \in \mI}$ be positive semi-definite families of $L$-functions, for $k \ge 1$. Then the families
\[
    \left(c L_{i,j}(s)\right)_{i,j \in \mI},
    \qquad\quad
    \left( L^{(1)}_{i,j}(s) + L^{(2)}_{i,j}(s) \right)_{i,j \in \mI},
    \qquad\quad \text{and} \qquad\quad 
    \left( L^{(1)}_{i,j}(s) \cdot L^{(2)}_{i,j}(s) \right)_{i,j \in \mI}
\]
are positive semi-definite. Moreoever, if there exists a family of formal Dirichlet series $(L^{(\infty)}_{i,j})_{i,j \in \mI}$ such that $L^{(k)}_{i,j} \to L^{(\infty)}_{i,j}$ as $k\to\infty$ (in the sense of pointwise convergence of Dirichlet coefficients), then the limit family $(L^{(\infty)}_{i,j})_{i,j \in \mI}$ is also positive semi-definite. In particular, if well-defined, then the families
\[
    \left(\sum_{k = 1}^\infty L_{i,j}^k(s) \right)_{i,j \in \mI},
    \qquad\quad 
    \left(\prod_{k = 1}^\infty L_{i,j}^k(s) \right)_{i,j \in \mI},
    \qquad\quad\text{and} \qquad\quad
    \left(\exp(L_{i,j}(s)) \right)_{i,j \in \mI}.
\]
are positive semi-definite.
\end{lemma}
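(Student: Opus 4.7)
My plan is to reduce each closure property to a statement about the matrices $M^{(k)}_{i,j} := a^{(k)}_{i,j}(m)$ of Dirichlet coefficients at a fixed index $m$, and then invoke standard facts about nonnegative definite (NND) matrices.

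\textbf{Scalar multiplication, addition, and limits.} The family $(cL_{i,j})$ has coefficient matrix $cM_{i,j}$ at each $m$, and scaling an NND matrix by $c\geq 0$ keeps it NND. For $(L^{(1)}_{i,j}+L^{(2)}_{i,j})$ the coefficient matrix at $m$ is $M^{(1)}+M^{(2)}$, and the sum of two NND matrices is NND. For the limit claim, note that NND-ness is characterized by $\vec{v}^{\,*} M \vec{v} \ge 0$ for every $\vec{v}\in\C^{\mI}$, which is a closed condition under entrywise (i.e., pointwise) convergence; since $\mI$ is finite, pointwise convergence of coefficients at each $m$ gives $M^{(k)}\to M^{(\infty)}$ in $\C^{\mI\times\mI}$, and the limit is NND.

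\textbf{Multiplicative closure (the main step).} The coefficient of $m^{-s}$ in the Dirichlet product is
\[
   a^{(1)}_{i,j}\cdot a^{(2)}_{i,j}\,(m) \;=\; \sum_{d\mid m} a^{(1)}_{i,j}(d)\, a^{(2)}_{i,j}(m/d).
\]
For each fixed pair $(d,m/d)$, the matrix $\big(a^{(1)}_{i,j}(d)\, a^{(2)}_{i,j}(m/d)\big)_{i,j\in\mI}$ is the Hadamard (entrywise) product of two NND matrices, and hence is NND by the Schur product theorem. Summing over divisors $d\mid m$, and using that sums of NND matrices are NND, yields the claim. This is the step I expect to be the main obstacle in the sense that it is the only place where a nontrivial fact about NND matrices (rather than linearity/limits) is needed; everything else is bookkeeping.

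\textbf{Deducing the consequences.} Finite sums $\sum_{k=1}^{K} L^{(k)}_{i,j}$ and finite products $\prod_{k=1}^{K} L^{(k)}_{i,j}$ are NND by iterating the first three properties. Their infinite versions then follow from the limit property, provided the partial sums/products converge coefficient-wise (which is guaranteed by the hypothesis that the given infinite sum/product is well-defined as a formal Dirichlet series). For the exponential, write
\[
   \exp(L_{i,j}(s)) \;=\; \sum_{k=0}^{\infty} \frac{L_{i,j}(s)^{k}}{k!};
\]
the family $(L_{i,j}^{0})_{i,j\in\mI}$ of constant $1$'s has coefficient matrix $\vec{1}\vec{1}^{\,*}$ at $m=1$ (and zero elsewhere), which is NND of rank one, and each $L_{i,j}^{k}$ is NND by the product rule, so $\exp(L_{i,j})$ is a nonnegative combination of NND families, hence NND by the closure under scalar multiplication, addition, and limits already established. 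The coefficient-wise convergence is automatic since, for each $m$, only finitely many $k$ (namely $k\le \log_2 m$ if the Dirichlet series of $L_{i,j}$ is supported on $m\ge 2$, or in general a finite number once one tracks the $m=1$ contribution) contribute.
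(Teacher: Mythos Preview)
Your proof is correct and follows essentially the same approach as the paper: reduce to matrices of coefficients at each $m$, invoke the standard closure of nonnegative definite matrices under scaling, addition, and limits, and handle the product via Dirichlet convolution and the Schur product theorem. The paper's version is terser but makes exactly the same moves, including the expansion $\exp L(s) = \sum_{k\ge 0} L(s)^k/k!$ for the final claim.
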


\begin{remark}
The last fact in \cref{lem:properties-nonnegative-definite} can be rephrased as follows: to show that the $L$-functions $L_{i,j}$ form a positive semi-definite family, it suffices to show the same for their formal logarithms $\log L_{i,j}$.
\end{remark}

\begin{proof}
The facts that a positive scaling of a positive semi-definite family, a sum of two positive semi-definite families, and the limit of a sequence of positive semi-definite families are positive semi-definite follow immediately from the corresponding matrix properties. For the product of two $L$-functions $L^{(k)}_{i,j} = \sum_{m \ge 1} \lambda_{i,j}^k(m)\, m^{-s}$, we note that the Dirichlet coefficients of $L^{(1)}_{i,j}(s) \cdot L^{(2)}_{i,j}(s)$ are given by
\[
    \lambda_{i,j}(m) = \sum_{d \mid m} \lambda_{i,j}^{(1)}(d)\, \lambda_{i,j}^{(2)}\left(\frac{m}{d}\right), 
\]
and the claim follows from Schur's product theorem that the Hadamard product of two positive semi-definite matrices is positive semi-definite. Equivalently, the fact that $( L^{(1)}_{i,j}(s) \cdot L^{(2)}_{i,j}(s))_{i,j \in \mI}$ is positive semi-definite is apparent from the characterization in \cref{eq:positive-linear-combinations}.

The last claim (about infinite sums, products, and exponentials) follows from the previous properties, noting that $\exp L(s) = \sum_{k \ge 0} \frac{1}{k!} L(s)^k$.
\end{proof}

We apply this notion to families of Rankin--Selberg $L$-functions. The following result is closely related to the computations of Brumley in \cite[Appendix]{soundararajan2019weak}.

\begin{proposition}[Rankin--Selberg $L$-functions are positive semi-definite] \label{prop:rs-nonnegative-definite}
For any finite set $\mF \subset \mathfrak{F}_n$, the family $(\log L(s, \pi \times \tilde{\pi}'))_{\pi, \pi' \in \mF}$
is positive semi-definite. In particular, so is $(L(s, \pi \times \tilde{\pi}'))_{\pi, \pi' \in \mF}$; thus for any $m \in \Z_+$ and $w_\pi \in \C$, one has
\[
    \sum_{\pi, \pi' \in \mF} w_\pi \bar w_{\pi'}\, \lambda_{\pi \times \tilde{\pi}'}(m)
    \ge 
    0.
\]
In fact, if $m = p^k$ is a prime power, where $k \ge 1$ and $p$ is an unramified prime for all $\pi \in \mF$, one has
\begin{equation} \label{eq:rs-explicit-lower-bound}
    \sum_{\pi, \pi' \in \mF} w_\pi \bar w_{\pi'}\, \lambda_{\pi \times \tilde{\pi}'}(p^k)
    \ge 
    \frac{1}{k}\left\vert \sum_{\pi \in \mF} w_\pi \sum_{j=1}^n \alpha_{\pi,j}(p)^k \right\vert^2.
\end{equation}
\end{proposition}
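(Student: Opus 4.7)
The plan is to establish nonnegative definiteness of the family $(\log L(s, \pi \times \tilde\pi'))_{\pi,\pi' \in \mF}$ by a local, prime-by-prime argument, deduce the analogous statement for $(L(s, \pi \times \tilde\pi'))_{\pi,\pi'}$ via exponentiation (\cref{lem:properties-nonnegative-definite}), and extract the explicit lower bound by isolating the linear term in the Taylor expansion of $\exp(\log L_p)$ at an unramified $p$. Since exponentials and sums both preserve nonnegative definiteness of families of $L$-functions, the Euler product $\log L(s, \pi \times \tilde\pi') = \sum_p \log L_p(s, \pi \times \tilde\pi')$ reduces the task to verifying nonnegative definiteness of each local family $(\log L_p(s, \pi \times \tilde\pi'))_{\pi,\pi'}$.

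At any prime $p$ unramified for all $\pi \in \mF$ (i.e.\ all but finitely many), I would combine \cref{eq:local-factors-rs,eq:local-params-unram-rs} to expand
\[
\log L_p(s, \pi \times \tilde\pi') \;=\; -\sum_{j,j'} \log\!\left(1 - \alpha_{\pi,j}(p)\, \bar\alpha_{\pi',j'}(p)\, p^{-s}\right) \;=\; \sum_{k \ge 1} \frac{p^{-ks}}{k}\, c_\pi(p,k)\, \overline{c_{\pi'}(p,k)},
\]
where $c_\pi(p,k) := \sum_{j=1}^n \alpha_{\pi,j}(p)^k$. Each coefficient $\tfrac{1}{k} c_\pi(p,k)\, \overline{c_{\pi'}(p,k)}$ is a rank-$1$ outer product in $(\pi,\pi')$, hence manifestly nonnegative definite. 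At the finitely many ramified primes $p$, the factorization $\mu_{\pi \times \tilde\pi',j,j'} = \mu_{\pi,j} + \bar\mu_{\pi',j'}$ generally fails, so I would instead invoke the local Langlands correspondence: attaching Weil--Deligne parameters $\phi_{\pi,p}$ to each $\pi$, one gets a Dirichlet coefficient of the form $\tfrac{1}{k} \Tr(\text{Frob}_p^k \mid (\phi_{\pi,p} \otimes \bar\phi_{\pi',p})^{I_p, N=0})$. Writing the projection onto inertia invariants as the averaging integral $P = \int_{I_p} h\, dh$, this coefficient becomes $\tfrac{1}{k}\int_{I_p} \Tr(\text{Frob}_p^k h \mid \phi_{\pi,p})\, \overline{\Tr(\text{Frob}_p^k h \mid \phi_{\pi',p})}\, dh$ (modulo the straightforward $N = 0$ restriction), i.e.\ a Gram matrix in $(\pi, \pi')$, which is nonnegative definite. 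Equivalently, one may simply cite the analogous computation in Brumley's appendix to \cite{soundararajan2019weak}.

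For the explicit lower bound at an unramified $p^k$, I would expand $L_p = \exp(\log L_p)$ via the Taylor series of $\exp$. Setting $b_{\pi,\pi'}(p^m) := \tfrac{1}{m} c_\pi(p,m)\, \overline{c_{\pi'}(p,m)}$, comparison of coefficients yields
\[
a_{\pi \times \tilde\pi'}(p^k) \;=\; \sum_{n=1}^{k} \frac{1}{n!} \sum_{\substack{m_1, \ldots, m_n \ge 1 \\ m_1 + \cdots + m_n = k}} \prod_{i=1}^{n} b_{\pi,\pi'}(p^{m_i}).
\]
Summed against $w_\pi \bar w_{\pi'}$, the $n = 1$ term contributes exactly $\tfrac{1}{k}\bigl|\sum_\pi w_\pi\, c_\pi(p,k)\bigr|^2$, which is the desired lower bound. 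For each $n \ge 2$, the product $\prod_i b_{\pi,\pi'}(p^{m_i})$ is a Hadamard product of rank-$1$ matrices $c_\pi(p, m_i)\, \overline{c_{\pi'}(p, m_i)}$, so it remains rank-$1$ and nonnegative definite (Schur's product theorem; cf.\ \cref{lem:properties-nonnegative-definite}), and contributes $\ge 0$ when summed against the weights. The main obstacle is the ramified case of the general claim, where the clean rank-$1$ structure from the unramified calculation breaks down and one must appeal to deeper local theory; happily, only the elementary unramified argument is required for the explicit lower bound that drives \cref{thm:density}.
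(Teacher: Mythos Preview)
Your proposal is correct and follows essentially the same approach as the paper: the local reduction via \cref{lem:properties-nonnegative-definite}, the rank-$1$ expansion $\log L_p = \sum_k \tfrac{1}{k} c_\pi(p,k)\,\overline{c_{\pi'}(p,k)}\,p^{-ks}$ at unramified primes, and the isolation of the $n=1$ term in the Taylor expansion of $\exp(\log L_p)$ for the explicit lower bound are all exactly what the paper does. For the ramified primes, the paper likewise falls back on Brumley's appendix to \cite{soundararajan2019weak}, and additionally supplies a self-contained proof in \cref{sec:ramified-primes} via the Langlands-quotient/Bernstein--Zelevinsky classification on the automorphic side. One caution about your Galois-side sketch: the monodromy on $\phi_{\pi,p} \otimes \bar\phi_{\pi',p}$ is $N_\pi \otimes 1 + 1 \otimes \bar N_{\pi'}$, whose kernel does not factor as a tensor product, so the ``straightforward $N=0$ restriction'' does not immediately preserve the Gram-matrix structure coming from the inertia average; making this precise requires decomposing into indecomposables $\mathrm{Sp}(L) \otimes \rho$ and tracking the $\min(L,L')$ combinatorics, which is exactly the content of the paper's appendix (and of Brumley's computation).
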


\begin{proof}
In what follows we work with the Dirichlet expansions of Rankin--Selberg $L$-functions in $\Re\, s > 1$; one may alternatively treat these as formal Dirichlet series. By \cref{lem:properties-nonnegative-definite}, positive semi-definiteness can be verified locally: i.e., it suffices to show that each local factor in $\log L(s, \pi \times \tilde{\pi}')=\sum_p \log L_p(s, \pi \times \tilde{\pi}')$ forms a positive semi-definite family in $\pi, \pi' \in \mF$.

If $p$ is unramified for both $\pi$ and $\pi'$, by \cite[(2.18)]{rudnick1996zeros} the (formal) logarithm of local factor has the form 
\begin{align*}
    \log L_p(s, \pi \times \tilde{\pi}') 
    &=
    \log \prod_{j,j' =1}^n \left(1 - \frac{\alpha_{\pi,j}(p)\, \bar \alpha_{\pi',j'}(p)}{p^s}\right)^{-1}
    \\
    &=
    -\sum_{j,j' =1}^n \log \left(1 - \frac{\alpha_{\pi,j}(p)\, \bar \alpha_{\pi',j'}(p)}{p^s}\right)
    \\
    &=
    \sum_{j,j' =1}^n \sum_{q = 1}^\infty \frac{\big(\alpha_{\pi,j}(p)\, \bar \alpha_{\pi',j'}(p)\big)^q}{q p^{qs}}
    =
    \sum_{q = 1}^\infty  \frac{\left(\sum_{j=1}^n \alpha_{\pi,j}(p)^q \right) \bar{\left(\sum_{j=1}^n \alpha_{\pi',j}(p)^q \right)}}{q p^{qs}},
\end{align*}
which is clearly positive semi-definite in $\pi, \pi'$. In fact, if $p$ is unramified for all $\pi \in \mF$, it follows that
\[
\begin{aligned}
    \sum_{\pi, \pi' \in \mF} w_\pi \bar w_{\pi'} \sum_{k=0}^\infty \frac{\lambda_{\pi \times \tilde{\pi}'}(p^k)}{p^{ks}}
    &=
    \sum_{\pi, \pi' \in \mF} w_\pi \bar w_{\pi'} \exp \left(\sum_{q = 1}^\infty  \frac{\left(\sum_{j=1}^n \alpha_{\pi,j}(p)^q \right) \bar{\left(\sum_{j=1}^n \alpha_{\pi',j}(p)^q \right)}}{q p^{qs}}\right)
    \\
    &= 
    \sum_{\ell = 1}^\infty \frac{1}{\ell!} 
    \sum_{\pi, \pi' \in \mF} w_\pi \bar w_{\pi'} \left(\sum_{q = 1}^\infty  \frac{\left(\sum_{j=1}^n \alpha_{\pi,j}(p)^q \right) \bar{\left(\sum_{j=1}^n \alpha_{\pi',j}(p)^q \right)}}{q p^{qs}}\right)^\ell.
\end{aligned}
\]
Each of the inner Dirichlet series is positive semi-definite in $\pi, \pi'$ in the sense of \cref{def:nonneg-def}. Thus for $k \ge 1$, identifying coefficients of $p^{-ks}$ and dropping all terms except for $\ell = 1$ by nonnegativity, we obtain
\[
\begin{aligned}
    \sum_{\pi, \pi' \in \mF} w_\pi \bar w_{\pi'}\, \lambda_{\pi \times \tilde{\pi}'}(p^k)
    &\ge 
    \sum_{\pi, \pi' \in \mF} w_\pi \bar w_{\pi'}  \frac{\left(\sum_{j=1}^n \alpha_{\pi,j}(p)^k \right) \bar{\left(\sum_{j=1}^n \alpha_{\pi',j}(p)^k \right)}}{k}
    \\
    & =
    \frac{1}{k} \left\vert \sum_{\pi \in \mF} w_\pi \sum_{j=1}^n \alpha_{\pi,j}(p)^k \right\vert^2.
\end{aligned}
\]
This proves \cref{eq:rs-explicit-lower-bound}. It remains to show that $\log L(s, \pi \times \tilde\pi')$ forms a positive semi-definite family in $\pi, \pi'$ when $p$ is an arbitrary prime, which may be ramified for some $\pi \in \mF$. This follows almost immediately from Brumley's local computations \cite[Formula (A.8)]{soundararajan2019weak}, after explicitating $J_a, z_j$ and $K_b, z_k'$ as functions of $\pi$ and $\pi'$; but for completeness, we include a proof in \cref{subsec:pos-def-ram-primes}.
\end{proof}

\begin{remark}
Taking $|\mF| = 1$ in \cref{prop:rs-nonnegative-definite} recovers the fact that the `diagonal' Rankin--Selberg $L$-functions $L(s, \pi \times \tilde\pi)$ have nonnegative Dirichlet coefficients \cite{rudnick1996zeros}, i.e.\ for all $m \ge 1$,
\[
    \lambda_{\pi \times \tilde\pi}(m) \ge 0.
\]
Taking $|\mF| = 2$, say $\mF = \{\pi, \pi'\}$, shows that the matrix 
\[
\begin{pmatrix}
    \lambda_{\pi \times \tilde \pi}(m) & \lambda_{\pi \times \tilde \pi'}(m) \\
    \lambda_{\pi' \times \tilde \pi}(m) & \lambda_{\pi' \times \tilde \pi'}(m)
\end{pmatrix}
\]
has nonnegative eigenvalues (in particular, nonnegative determinant), so
\[
    |a_{\pi \times \tilde \pi'}(m)| \le 
    \sqrt{\lambda_{\pi \times \tilde \pi}(m)\, \lambda_{\pi' \times \tilde \pi'}(m)}.
\]
Applying the same argument for the Dirichlet coefficients of $\log L(s, \pi \times \tilde\pi')$ recovers \cite[Lemma 2.2, first part]{soundararajan2019weak}. But in this paper, we will only use \cref{prop:rs-nonnegative-definite} for large families $\mF$ of representations.
\end{remark}

\begin{remark}
Writing $\log L(s, \pi \times \tilde\pi') = \sum_{m = 1}^\infty \frac{b_{\pi \times \tilde\pi'}}{m^s}$ in $\Re\, s > 1$, the positive semi-definiteness property in \cref{prop:rs-nonnegative-definite} states that $\sum_{\pi, \pi' \in \mF} w_\pi \bar w_{\pi'}\, b_{\pi \times \tilde\pi'} \ge 0$, for any complex weights $(w_\pi)$. If one was only interested in the case $w_\pi \equiv 1$, this would follow by considering the isobaric sum of all $\pi \in \mF$ (and the Rankin--Selberg convolution with its contragredient). Morally, our situation corresponds to a ``weighted isobaric sum'' with complex weights.
\end{remark}

\subsection{Triple sums of Rankin--Selberg coefficients} \label{subsec:sums-rs}
In the previous subsection, we looked at sums over $\pi$ and $\pi'$ of the Rankin--Selberg coefficients $a_{\pi \times \tilde\pi'}(m)$. Here we insert an additional sum over $m$, seeking upper bounds; we begin with the following lemma. We recall that $n$ is fixed.

\begin{lemma}[Smooth sums of Rankin--Selberg coefficients] \label{lem:rs-convexity}
Let $\pi, \pi' \in \mathfrak{F}_n$. Let $\beta \in \C$ and $\Phi(x) := x^{\beta} e^{-x^2}$ such that\footnote{We use this explicit choice of $\Phi$ to have fine control over its Mellin transform.} one of the following is true:
\begin{itemize}
    \item[$(i)$.] $0 < \Re\, \beta \asymp 1$ and $\Im\, \beta \ll 1$, or
    \item[$(ii)$.] $\pi, \pi'$ are unramified at $\infty$, and $-\beta = \mu_{\pi \times \tilde\pi',j,j'}(\infty)$ for some $1 \le j, j' \le n$.
\end{itemize}
Then for any $M \gg 1$, one has
\begin{equation} \label{lem:smooth-sum-bound}
    \sum_{m = 1}^\infty \Phi\left(\frac{m}{M}\right) \lambda_{\pi \times \tilde{\pi}'}(m)
    \ll
    (M \mfC_{\pi \times \tilde\pi'})^{o(1)} \cdot
    \begin{cases} 
    M,
    &{\rm if}\;
    \pi = \pi',
    \\
    \min(M, \sqrt{\mfC_{\pi \times \tilde\pi'}}),
    &{\rm otherwise}.
    \end{cases}
\end{equation}
\end{lemma}

\begin{remark}
Condition $(i)$ in \cref{lem:rs-convexity} will ultimately be relevant for our estimates at the finite places, while condition $(ii)$ will be relevant for the infinite place. Intuitively, ignoring the smooth weights, there are two relevant operations that one can apply to a sum of coefficients of an $L$-function, with Dirichlet series $L(s) = \sum_{m = 1}^\infty a(m) m^{-s}$ in $\Re\, s > 1$ and conductor $\mfC$. One is to bound it in terms of values of $L(s)$ near the $1$-line, which gives a bound of the shape (informally)
\[
    \sum_{m \sim M} a(m) \lesssim M^{1+o(1)}.
\]
The other is to pass to a dual sum of length $\mfC/M$ via an approximate functional equation, giving
\begin{equation} \label{eq:approx-func-eq-sketch}
    \sum_{m \sim M} a(m) \approx
    \mfC^{1/2} \sum_{m \sim \mfC/M} \frac{\tilde a(m)}{m}
    +
    \textnormal{Residues}.
\end{equation}
In the absence of poles, combining the two bounds above gives $\sum_{m \sim M} a(m) \lesssim \mfC^{1/2} \frac{\mfC}{M} (\frac{\mfC}{M})^{-1} = \mfC^{1/2}$.
\end{remark}

\begin{proof}[Proof of \cref{lem:rs-convexity}]
We first note that condition $(ii)$ implies $|\Re\, \beta| \le 1$; so under either of conditions $(i)$ and $(ii)$, we have $-1 \le \Re\, \beta \ll 1$ and
\begin{equation} \label{eq:beta-imaginary-condition} 
    \Im\, \beta \ll 1 + \max_{1 \le j,j'\le n} |\mu_{\pi \times \tilde\pi',j,j'}(\infty)|.
\end{equation} 
Let $\eps \in (0, \tfrac{1}{2})$; if $(i)$ holds, we also take $\eps < \tfrac{1}{2} \Re\, \beta$, so that in particular $|\Re\, \beta - \eps| \ge \eps/2$. By replacing $\pi$ with $\pi[- i u] = \pi \otimes |\cdot|^{-iu}$ and $\beta$ with $\beta - iu$ where $u = \Im\, \beta$, we may also assume that
\[
    \Im\, \beta = 0.
\]
Indeed, these substitutions make no impact on conditions $(i)$ and $(ii)$, and affect the left-hand side of \cref{lem:smooth-sum-bound} only by a factor of $M^{i\,\Im\,\beta}$, and the right hand side of \cref{lem:smooth-sum-bound} only by a constant. For the last claim, note that \cref{eq:beta-imaginary-condition} and our definition of total conductors from \cref{eq:total-conductor-rs} imply
\[
    \mfC_{\pi[-i\, \Im\, \beta] \times \tilde\pi'} \ll \mfC_{\pi \times \tilde\pi}.
\]
We now use Mellin inversion, as in \cref{eq:mellin-inversion}, to expand
\begin{equation} \label{eq:mellin-rs}
    \sum_{m=1}^\infty \Phi\left(\frac{m}{M}\right) \lambda_{\pi \times \tilde{\pi}'}(m)
    =
    \frac{1}{2\pi i} \int_{(2)} M^s L(s, \pi \times \tilde \pi')\, \tilde\Phi(s)\, ds.
\end{equation}
Here we can explicitly compute (in $\Re\, (s+\beta) > 0$, and by meromorphic continuation elsewhere)
\[
\begin{aligned}
    \tilde \Phi(s) 
    &= \int_0^\infty x^{s-1+\beta} e^{-x^2} dx
    \\
    &= \frac{1}{2}\int_0^\infty y^{(s-1+\beta)/2} e^{-y} y^{-1/2} dy
    =
    \frac{1}{2}\Gamma\left(\frac{s + \beta}{2}\right).
\end{aligned}
\] 
In particular, \cref{eq:gamma-explicit} and $\Re\, \beta \ll 1, \Im\, \beta = 0$ imply that for $\sigma \ll 1$ and $\min_{m \in 2\Z_{\le 0}} |\sigma + it + \beta - m| \ge \eps/2$, one has 
\begin{equation} \label{eq:specific-mellin-decay}
    \tilde\Phi(\sigma + it) \asymp_\eps \left(1 + \frac{|t|}{2}\right)^{\frac{\sigma + \Re\, \beta - 1}{2}} e^{-\frac{\pi}{4}|t|}
    \ll e^{-\frac{1}{2}|t|}.
\end{equation}
Since $L(s, \pi \times \tilde \pi')$ has moderate vertical growth, we may shift contours. We first shift to $\Re\, s = 1 + \eps$ to obtain
\[
\begin{aligned}
    \sum_{m=1}^\infty \Phi\left(\frac{m}{M}\right) \lambda_{\pi \times \tilde{\pi}'}(m)
    &=
    \frac{1}{2\pi i} \int_{(1+\eps)} M^s L(s, \pi \times \tilde \pi')\, \tilde\Phi(s)\, ds
    \\
    &\ll_\alpha
    M^{1+\eps} \max_{t \in \R} \frac{L(1 + \eps + it, \pi \times \tilde\pi')}{1 + |t|}.
\end{aligned}
\]
But by the convexity bound of Li \cite[Theorem 2]{li2010upper}, we have 
\begin{equation} \label{eq:bound-on-1-line}
    L(1 + \eps + it, \pi \times \tilde\pi')
    =
    L(1 + \eps, \pi[it] \times \tilde\pi')
    \ll_\eps ((1 + |t|)\, \mfC_{\pi \times \tilde\pi'})^\eps,
\end{equation}
where we recall the notation $\pi[z] = \pi \otimes |\cdot|^z$.
Thus we always have
\[
    \sum_{m=1}^\infty \Phi\left(\frac{m}{M}\right) \lambda_{\pi \times \tilde{\pi}'}(m) \ll_\eps M^{1+\eps} \mfC_{\pi \times \tilde\pi'}^{2\eps}.
\]

Now suppose that the original representations in $\mathfrak{F}_n$ are distinct, so we have $\pi[it] \neq \pi'$ for all $t \in \R$ (even after potentially twisting $\pi$ by $|\cdot|^{-iu}$ in the beginning of the proof). In this case, $\Lambda(s, \pi \times \tilde\pi')$ is entire, so $L(s, \pi \times \tilde\pi')$ is also entire; moreover, if $(ii)$ holds, then $L(s, \pi \times \tilde\pi')$ has a zero at $s = -\beta$ to cancel the corresponding pole of $\Gamma_\R(s - \mu_{\pi \times \tilde\pi',j,j'})$ inside $L_\infty(s, \pi \times \tilde\pi')$.
Then we can shift the contour in \cref{eq:mellin-rs} to $\Re\, s = -\eps$, picking up no residues in the process. Indeed, the simple pole of $\tilde \Phi(s)$ at $s = -\beta$ is either outside the contour integral (if $(i)$ holds), or cancelled by the zero of $L(s, \pi \times \tilde\pi')$ (if $(ii)$ holds). The other poles at $s + \beta \in 2\Z_{<0}$ are also outside the contour since $-\eps + \Re\, \beta > -\tfrac{1}{2} - 1 > -2$. Thus
\[
\begin{aligned}
    \sum_{m=1}^\infty \Phi\left(\frac{m}{M}\right) \lambda_{\pi \times \tilde{\pi}'}(m)
    &=
    \frac{1}{2\pi i} \int_{(-\eps)} M^s L(s, \pi \times \tilde \pi')\, \tilde\Phi(s)\, ds
    \\
    &=
    \frac{1}{2\pi i} \int_{(-\eps)} M^s L(1-s, \tilde\pi \times \pi') \frac{L(s, \pi \times \tilde\pi')}{L(1-s, \tilde\pi \times \pi')}\, \tilde\Phi(s)\, ds.
\end{aligned}
\]
Plugging in \cref{eq:bound-on-1-line}, \cref{eq:infty-quotients-rs}, and \cref{eq:specific-mellin-decay}, the triangle inequality gives
\[
    \sum_m \Phi\left(\frac{m}{M}\right) \lambda_{\pi \times \tilde{\pi}'}(m) \ll_\eps \mfC_{\pi \times \tilde\pi'}^{\frac{1}{2} + 2\eps},
\]
which completes our proof.
\end{proof}

Finally, using \cref{lem:rs-convexity} and \cref{prop:rs-nonnegative-definite}, we can prove our key estimate for exploiting the averaging over automorphic representations. This result may be of independent interest to the reader.

\begin{proposition}[Triple sums of Rankin--Selberg coefficients] \label{prop:rs-triple-sums}
Let $\mF \subset \mathfrak{F}_n$ be finite, and let $\mfC_{RS} := \max_{\pi, \pi' \in \mF} \mfC_{\pi \times \tilde \pi'}$. Let $M \gg 1$ and $(u_m)_{m \le M}$, $(\beta_\pi)_{\pi \in \mF}$ be complex sequences such that one of the following holds:
\begin{itemize}
    \item[$(i)$.] For all $\pi \in \mF$, $\beta_\pi \ll 1$, and $(u_m)$ is supported on $m \asymp M$, or
    \item[$(ii)$.] All $\pi \in \mF$ are unramified at $\infty$ and satisfy $\beta_\pi = \mu_{\pi,j}(\infty)$ for some $1 \le j \le n$.
\end{itemize}
Then for any complex weights $(w_{\pi,\pi'})_{\pi, \pi' \in \mF}$ forming a positive semi-definite matrix in $\C^{\mF \times \mF}$, one has
\begin{equation} \label{eq:rs-triple-sums-non-def}
\begin{aligned}
    \sum_{\pi, \pi' \in \mF} w_{\pi, \pi'} \sum_{m \le M} u_m\,
    \lambda_{\pi \times \tilde\pi'}(m) \left(\frac{M}{m}\right)^{\beta_\pi + \bar\beta_{\pi'}}
    &\ll
    (M\mfC_{RS})^{o(1)}\, 
    \|w\|_\infty
    |\mF|^2\, \|u\|_\infty\, M 
    \\
    &\times \left(|\mF|^{-1} + \left(1 + \frac{M}{\sqrt{\mfC_{RS}}}\right)^{-1} \right).
\end{aligned}
\end{equation}

\end{proposition}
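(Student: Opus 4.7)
The plan is to leverage the Rankin--Selberg nonnegative definiteness from \cref{prop:rs-nonnegative-definite} after folding the rank-one factor $(M/m)^{\beta_\pi + \bar\beta_{\pi'}} = (M/m)^{\beta_\pi}\cdot\overline{(M/m)^{\beta_{\pi'}}}$ into the weights. Writing the Hermitian PSD matrix $(w_{\pi,\pi'})$ via its eigendecomposition as a positive linear combination of rank-one outer products $\vec u^{(\alpha)}(\vec u^{(\alpha)})^*$ and applying \cref{prop:rs-nonnegative-definite} to the modified vectors $u_\pi^{(\alpha)}(M/m)^{\beta_\pi}$, I would first establish that for every $m\ge 1$,
\[
    \sum_{\pi,\pi'\in\mF} w_{\pi,\pi'}\, a_{\pi\times\tilde\pi'}(m)\,(M/m)^{\beta_\pi+\bar\beta_{\pi'}} \;\ge\; 0.
\]
This nonnegativity is the key input that will allow me to replace the rough cutoff $m\le M$ and the complex coefficients $u_m$ by a smooth nonnegative envelope.

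Concretely, applying the triangle inequality in $m$ and then the above nonnegativity (to drop the absolute values inside), I would bound
\[
    |S| \;\le\; \|u\|_\infty \sum_{\pi,\pi'\in\mF} w_{\pi,\pi'} \sum_{m=1}^{\infty} \Phi_{\pi,\pi'}(m/M)\, a_{\pi\times\tilde\pi'}(m),
\qquad \Phi_{\pi,\pi'}(x) := \Phi(x)\, x^{-(\beta_\pi+\bar\beta_{\pi'})},
\]
where $\Phi\ge 0$ is a smooth majorant of the indicator of the support of $(u_m)$ under $m\mapsto m/M$. The choice of $\Phi$ is dictated by the need to match the precise form $x^\beta e^{-x^2}$ required by \cref{lem:rs-convexity}. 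In case (ii), with $\beta_\pi = \mu_{\pi,j}(\infty)$, I would take $\Phi(x) = e\cdot e^{-x^2}$, so that $\Phi_{\pi,\pi'}(x) = e\cdot x^{-\mu_{\pi\times\tilde\pi',j,j'}(\infty)} e^{-x^2}$ fits condition (ii) of the lemma via \cref{eq:local-params-unram-rs} (and $\Phi\ge 1$ on $(0,1]$). In case (i), I would take $\Phi(x) = C x^\sigma e^{-x^2}$ with $\sigma$ a sufficiently large absolute constant so that $\Re(\sigma-\beta_\pi-\bar\beta_{\pi'})\asymp 1$ and $|\Im(\sigma-\beta_\pi-\bar\beta_{\pi'})|\ll 1$ (possible since $\beta_\pi\ll 1$), thus verifying condition (i) of \cref{lem:rs-convexity}, with $C$ large enough to make $\Phi\ge 1$ on the bounded support of $(u_m)/M$.

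Invoking \cref{lem:rs-convexity} for each pair $(\pi,\pi')$, and using the hypothesis that distinct elements of $\mF$ are not twist-equivalent (so the first branch of the lemma's bound applies only when $\pi=\pi'$), gives
\[
    \sum_{m=1}^{\infty}\Phi_{\pi,\pi'}(m/M)\, a_{\pi\times\tilde\pi'}(m) \;\ll\; (M\mfC_{RS})^{o(1)}\cdot\begin{cases} M & \text{if } \pi = \pi', \\ \min(M,\sqrt{\mfC_{RS}}) & \text{if } \pi \ne \pi'. \end{cases}
\]
Summing $|w_{\pi,\pi'}|\le \|w\|_\infty$ over the $|\mF|$ diagonal and at most $|\mF|^2$ off-diagonal pairs, and applying $\min(1,\sqrt{\mfC_{RS}}/M)\asymp (1+M/\sqrt{\mfC_{RS}})^{-1}$, rearranges to the claimed bound \cref{eq:rs-triple-sums-non-def}. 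The main obstacle is fitting the form of $\Phi_{\pi,\pi'}$ to a single smooth majorant $\Phi$ uniformly in $\pi,\pi'$; this is what forces the dichotomy between cases (i) and (ii) and the particular family $\Phi(x) = C x^\sigma e^{-x^2}$, since \cref{lem:rs-convexity} is tailored to this Mellin-friendly shape. A secondary subtlety is ensuring the diagonal term is correctly isolated, for which the twist-inequivalence hypothesis on $\mF$ is essential.
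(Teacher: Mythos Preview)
Your proposal is correct and follows essentially the same approach as the paper's proof. The only cosmetic differences are that the paper establishes the nonnegativity of $\sum_{\pi,\pi'} w_{\pi,\pi'}\,a_{\pi\times\tilde\pi'}(m)\,(M/m)^{\beta_\pi+\bar\beta_{\pi'}}$ by invoking Schur's product theorem (the Hadamard product of the PSD matrices $(w_{\pi,\pi'})$ and $(a_{\pi\times\tilde\pi'}(m))$ is PSD, then pair with the vector $v_\pi=(M/m)^{\bar\beta_\pi}$), whereas you unfold the eigendecomposition of $(w_{\pi,\pi'})$ and apply \cref{prop:rs-nonnegative-definite} to each rank-one piece; these two arguments are equivalent. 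Your choice of majorant $\Phi(x)=Cx^\sigma e^{-x^2}$ (case~(i)) and $\Phi(x)=e\cdot e^{-x^2}$ (case~(ii)) is in fact more faithful to the form $x^\beta e^{-x^2}$ stipulated in \cref{lem:rs-convexity} than the paper's own $\Phi(x)=x^B e^{-\sqrt{x}}$, so no adjustment is needed there.
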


\begin{remark}
The first line from the right-hand side of \cref{eq:rs-triple-sums-non-def} contains the `trivial' bound, which can be achieved without the averaging over $\pi, \pi' \in \mF$; the second line contains two saving factors: one from the diagonal terms $\pi = \pi'$, and one from the off-diagonal terms $\pi \neq \pi'$. If $w_{\pi,\pi'}$ are arbitrary complex weights, an application of Cauchy--Schwarz in $\pi, \pi'$ combined with the argument below produces a similar bound as in \cref{eq:rs-triple-sums-non-def}, with the diagonal saving $|\mF|^{-1}$ replaced by $|\mF|^{-1/2}$.
\end{remark}

\begin{proof}[Proof of \cref{prop:rs-triple-sums}]
Let $\mB$ denote the sum in the left-hand side of \cref{eq:rs-triple-sums-non-def}. 
By \cref{prop:rs-nonnegative-definite} and Schur's product theorem, the matrix $M \in \C^{\mF \times \mF}$ with entries
\[
    M_{\pi,\pi'} := w_{\pi,\pi'}\, \lambda_{\pi \times \tilde\pi'}(m)
\]
is also positive semi-definite. Letting $\vec{v} \in \C^{\mF}$ be the (column) vector with entries
\[
    v_{\pi} := \left(\frac{M}{m}\right)^{\bar\beta_\pi},
\]
we thus have
\[
    \sum_{\pi, \pi' \in \mF}
    w_{\pi, \pi'}\,
    \lambda_{\pi \times \tilde\pi'}(m)
    \left(\frac{M}{m}\right)^{\beta_\pi + \bar\beta_{\pi'}}
    =
    \vec{v}^{\,*} M \vec{v} \in \R_{\ge 0}.
\]
By the triangle inequality, it follows that
\[
\begin{aligned}
    |\mB| &= \left\vert \sum_{m \le M} u_m\,
    \sum_{\pi, \pi' \in \mF} w_{\pi, \pi'}\, \lambda_{\pi \times \tilde\pi'}(m) \left(\frac{M}{m}\right)^{\beta_\pi + \bar\beta_{\pi'}} \right\vert
    \\
    &\ll
    \|u\|_\infty
    \sum_{m=1}^\infty
    \Phi\left(\frac{m}{M}\right) \sum_{\pi, \pi' \in \mF} w_{\pi, \pi'}\, \lambda_{\pi \times \tilde\pi'}(m) \left(\frac{M}{m}\right)^{\beta_\pi + \bar\beta_{\pi'}},
\end{aligned}
\]
where we inserted a majorant given by the nonnegative smooth function 
\[
    \Phi(x) := x^B e^{-\sqrt{x}},
    \qquad\qquad 
    B := 
    \begin{cases} 
    1 + 2\max_{\pi \in \mF} |\Re\, \beta_\pi|, & \text{if $(i)$ holds,} \\
    0, & \text{otherwise.}
    \end{cases}
\] 
Note that having $B > 0$ is acceptable if condition $(i)$ in our assumption holds, since then $(u_m)$ is supported in $m \asymp M$. Denoting $\Phi_{\pi,\pi'}(x) := \Phi(x)\, x^{-\beta_\pi - \bar\beta_{\pi'}}$, we thus have
\[
    |\mB| \le \|u\|_\infty
    \sum_{\pi, \pi' \in \mF} w_{\pi, \pi'}
    \sum_{m=1}^\infty
    \Phi_{\pi,\pi'}\left(\frac{m}{M}\right) \lambda_{\pi \times \tilde\pi'}(m).
\]
Now the inner sum over $m$ satisfies the assumptions in \cref{lem:rs-convexity}, with $\beta = B - \beta_\pi - \bar{\beta_{\pi'}}$; indeed, if condition $(i)$ or $(ii)$ of \cref{prop:rs-triple-sums} holds, then the corresponding condition of \cref{lem:rs-convexity} holds. So applying \cref{lem:rs-convexity} yields
\[
\begin{aligned}
    |\mB| 
    &\ll 
    (M \mfC_{RS})^{o(1)} \|u\|_\infty
    \sum_{\pi, \pi' \in \mF} |w_{\pi, \pi'}|
    \left(M \one_{\pi = \pi'} + \min(M, \mfC_{RS}^{1/2})\right)
    \\
    &=
    (M \mfC_{RS})^{o(1)} \|u\|_\infty\, M
    \left(
    \sum_{\pi \in \mF} |w_{\pi, \pi}|
    +
    \min\left(1, \frac{\sqrt{\mfC_{RS}}}{M}\right) \sum_{\pi, \pi' \in \mF} |w_{\pi,\pi'}|\right).
\end{aligned}
\]
Trivially bounding $\frac{1}{|\mF|} \sum_{\pi \in \mF} |w_{\pi,\pi}|$ and $\frac{1}{|\mF|^2} \sum_{\pi, \pi' \in \mF} |w_{\pi,\pi'}|$ by the $\ell^\infty$ norm completes our proof.
\end{proof}

\section{The density theorems} \label{sec:density-theorems}

\subsection{The non-archimedean case of \cref{thm:density}} \label{subsec:non-archimedean}

Fix $n \ge 2$ and a place $v = p < \infty$. We aim for a density theorem for the local parameters of cuspidal automorphic representations at $p$, which we access through the Dirichlet coefficients at powers of $p$. The following power sum bound due to Tur\'an will be helpful in this process.

\begin{lemma}[Tur\'an's second theorem for power sums \cite{montgomery1994ten}] \label{lem:turan}
For any positive integers $M, N$ and any complex numbers $z_1, \ldots, z_N$ with $\max_j |z_j| \ge 1$, one has
\[
    \max_{M+1 \le k \le M+N}
    \left\vert \sum_{j=1}^N z_j^k \right\vert 
    \gg_N 
    \frac{1}{M^N} \max_j |z_j|^M.
\]
\end{lemma}

\begin{proof}
By symmetry, we may assume without loss of generality that $|z_1| \ge |z_2| \ge \cdots \ge |z_N|$, so in particular $|z_1| \ge 1$. By writing
\[
\begin{aligned}
    \max_{M+1 \le k \le M+N}
    \left\vert \sum_{j=1}^N z_j^k \right\vert 
    &=
    \max_{M+1 \le k \le M+N} |z_1|^k
    \left\vert \sum_{j=1}^N (z_1^{-1} z_j)^k \right\vert 
    \\
    &\ge 
    |z_1|^M \max_{M+1 \le k \le M+N}
    \left\vert \sum_{j=1}^N (z_1^{-1} z_j)^k \right\vert,
\end{aligned}
\]
we can reduce to the case $|z_1| = 1$. Now Theorem 4 from \cite[Ch.\,5, p.\ 94]{montgomery1994ten} states the sharper and more general lower bound
\[
    \max_{M+1 \le k \le M+N}
    \left\vert \sum_{j=1}^N b_j z_j^k \right\vert 
    \ge 
    2\left(\frac{N}{8e(M+N)}\right)^N \min_{j=1}^N \sum_{i=1}^j b_i
    \qquad\quad 
    \text{when } 1 = |z_1| \ge |z_2| \ge \cdots \ge |z_N|,
\]
for any complex numbers $b_1, \ldots b_N$. We take $b_1 = \cdots = b_N = 1$ and apply a crude lower bound on the right-hand side.
\end{proof}

\begin{theorem}[Density at finite places] \label{thm:density-finite}
Let $\mF \subset \mathfrak{F}_n$ be finite such that
\[
    \forall \pi \in \mF,\qquad\quad \pi \text{ is unramified at $v$ and } \max_j |\Re\, \mu_{\pi, j}(p)| \ge \theta,
\]
for some $\theta \in (0, \tfrac{1}{2})$. Then for $\mfC_{RS} := \max_{\pi, \pi' \in \mF} \mfC_{\pi \times \tilde\pi'}$, one has
\[
    |\mF| \ll_{n,p,\theta} \mfC_{RS}^{\frac{1-2\theta}{4\theta} + o(1)}.
\]
\end{theorem}

\begin{proof}
Let $\ell$ be a positive integer and $(w_\pi)_{\pi \in \mF}$ be complex numbers of absolute value $1$ to be chosen shortly. Consider the sum
\begin{equation} \label{eq:sum-finite-places}
    \mS := \frac{1}{|\mF|^2} \sum_{\pi, \pi' \in \mF} w_\pi \bar w_{\pi'} 
    \lambda_{\pi \times \tilde\pi'}(\ell).
\end{equation}
By \cref{prop:rs-triple-sums} for $M = \ell$ and the sequence 
\[
    u_m := \one_{m = \ell},
\] 
we obtain the upper bound
\[
    \mS \ll (\ell \mfC_{RS})^{o(1)} \left(\frac{\ell}{|\mF|} + \sqrt{\mfC_{RS}} \right).
\]
Now take $\ell = p^k$, for some $k \ge 1$. By \cref{prop:rs-nonnegative-definite}, we can lower bound
\[
    \mS
    \ge 
    \frac{1}{k |\mF|^2} \left\vert \sum_{\pi \in \mF} w_\pi \sum_{j=1}^n \alpha_{\pi,j}(p)^k \right\vert^2.
\]
Picking $w_\pi$ to achieve absolute values around the inner sum above, it follows that
\[
    \frac{1}{\sqrt{k} |\mF|} \sum_{\pi \in \mF} \left\vert\sum_{j=1}^n \alpha_{\pi,j}(p)^k \right\vert
    \ll 
    (p^k \mfC_{RS})^{o(1)} \sqrt{\frac{p^k}{|\mF|} + \sqrt{\mfC_{RS}}}.
\]
This holds for any $k \ge 1$. Summing over $k$ from $k_0 + 1$ to $k_0 + n$, for some $k_0 \ge 1$ to be chosen shortly, we obtain
\[
    \frac{1}{\sqrt{k_0} |\mF|}
    \sum_{\pi \in \mF} \sum_{k=k_0+1}^{k_0+n} \left\vert\sum_{j=1}^n \alpha_{\pi,j}(p)^k \right\vert
    \ll_p 
    (p^{k_0} \mfC_{RS})^{o(1)} \sqrt{\frac{p^{k_0}}{|\mF|} + \sqrt{\mfC_{RS}}},
\]
where we implicitly used that $n$ is fixed. Applying \cref{lem:turan}, we reach
\begin{equation} \label{eq:after-turan}
    \frac{1}{k_0^{n+1/2} |\mF|} \sum_{\pi \in \mF} \max_j |\alpha_{\pi,j}(p)|^{k_0}
    \ll_p 
    (p^{k_0} \mfC_{RS})^{o(1)} \sqrt{\frac{p^{k_0}}{|\mF|} + \sqrt{\mfC_{RS}}}.
\end{equation}
But by our assumption on $\mF$ and unitarity ($\{\mu_{\pi,j}\} = \{-\bar \mu_{\pi,j}\}$), we have 
\[
    \max_j |\alpha_{\pi,j}(p)|^{k_0}
    =
    \max_j p^{k_0 \Re\, \mu_{\pi,j}(p)}
    \ge 
    p^{k_0 \theta},
    \qquad\quad 
    \forall \pi \in \mF.
\]
Plugging this into \cref{eq:after-turan} and squaring, we reach
\begin{equation} \label{eq:after-turan-2}
    \frac{1}{k_0^{2n+1}} p^{2k_0\theta}
    \ll_p 
    (p^{k_0} \mfC_{RS})^{o(1)} \left(\frac{p^{k_0}}{|\mF|} + \sqrt{\mfC_{RS}} \right)
\end{equation}
To optimize, we pick $k_0$ such that $p^{k_0} \asymp_p |\mF| \sqrt{\mfC_{RS}}$; in particular, the factor of $k_0^{2n+1}$ grows like $(|\mF| \mfC_{RS})^{o(1)}$. We conclude that
\[
    (|\mF|\sqrt{\mfC_{RS}})^{2\theta} \ll_p (|\mF|\mfC_{RS})^{o(1)} \sqrt{\mfC_{RS}},
\]
which rearranges to the desired bound.
\end{proof}

In particular, \cref{thm:density-finite} establishes \cref{thm:density} at the finite places (using \cref{eq:total-conductor-bound} to bound $\mfC_{RS}$).


\subsection{The archimedean case of \cref{thm:density}} \label{subsec:archimedean}
Fix $n \ge 2$ and $v = \infty$. Here we modify the argument in \cref{subsec:non-archimedean} to prove density theorems for the local parameters of cuspidal automorphic representations at $\infty$. Following Luo--Rudnick--Sarnak \cite{luo1995selberg}, we access these parameters through the vanishing of $L(s, \pi \times \tilde\pi')$ at $s = \mu_{\pi,j}(\infty) + \bar \mu_{\pi,j'}(\infty)$; this was used in \cref{lem:rs-convexity,prop:rs-triple-sums}, conditions $(ii)$.

\begin{theorem}[Density at the infinite place] \label{thm:density-infinity}
Let $\mF \subset \mathfrak{F}_n$ be finite such that
\[
    \forall \pi \in \mF,\qquad\quad \pi \text{ is unramified at $v$ and } \max_j |\Re\, \mu_{\pi, j}(\infty)| \ge \theta,
\]
for some $\theta \in (0, \tfrac{1}{2})$. Then with $\mfC_{RS} := \max_{\pi, \pi' \in \mF} \mfC_{\pi \times \tilde\pi'}$, one has
\[
    |\mF| \ll_{n,\theta} \mfC_{RS}^{\frac{1-2\theta}{4\theta} + o(1)}.
\]
\end{theorem}

\begin{proof}
For $\pi \in \mF$, let
\[
    \beta_\pi := \max_j \Re\, \mu_{\pi,j}(\infty).
\]
In particular, by unitarity we have $\beta_\pi = \max_j |\Re\, \mu_{\pi,j}(\infty)| \ge \theta$. Let $\ell$ be a positive integer and $(w_\pi)_{\pi \in \mF}$ be $1$-bounded complex weights to be chosen shortly.
In analogy with \cref{eq:sum-finite-places}, we consider the sum
\[
\begin{aligned}
    \mS &:=
    \frac{1}{|\mF|^2} \sum_{\pi, \pi' \in \mF} w_\pi \bar w_{\pi'} \ell^{\beta_\pi + \bar \beta_{\pi'}}
    \\
    &= \frac{1}{|\mF|^2}\, \sum_{\pi, \pi' \in \mF}
    w_\pi \bar w_{\pi'}
    \lambda_{\pi \times \tilde\pi'}(1) \left(\frac{\ell}{1}\right)^{\beta_\pi + \bar\beta_{\pi'}}.
\end{aligned}
\]
Using \cref{prop:rs-triple-sums} (specifically, \cref{eq:rs-triple-sums-non-def}) for $M = \ell$ and the sequence
\[
    u_m := \one_{m = 1},
\]
we can again bound
\[
    \mS \ll (\ell \mfC_{RS})^{o(1)} \left(\frac{\ell}{|\mF|} + \sqrt{\mfC_{RS}} \right).
\]
On the other hand, picking $w_\pi := |\ell^{\beta_\pi}| / \ell^{\beta_\pi}$, we have the lower bound
\[
    \ell^{2\theta} \le 
    \frac{1}{|\mF|^2} \sum_{\pi, \pi' \in \mF} \left\vert\ell^{\beta_\pi} \ell^{\bar\beta_{\pi'}}\right\vert
    =
    \mS,
\]
Putting these two bounds together gives
\begin{equation} \label{eq:optimize-archimedean}
    \ell^{2\theta}
    \ll 
    (\ell \mfC_{RS})^{o(1)} \left(\frac{\ell}{|\mF|} + \sqrt{\mfC_{RS}} \right).
\end{equation}
We pick $\ell \asymp |\mF| \sqrt{\mfC_{RS}}$ to optimize, and conclude that
\[
    \left(|\mF| \sqrt{\mfC_{RS}}\right)^{2\theta - o(1)}
    \ll 
    \mfC_{RS}^{\frac{1}{2}+o(1)},
\]
which rearranges to the desired bound, as before.
\end{proof}

This also completes the proof of \cref{thm:density}, in light of \cref{thm:density-finite} and \cref{eq:total-conductor-bound}.

\subsection{Amplification and the pointwise Luo--Rudnick--Sarnak bounds} \label{subsec:amplif-unram}

The proofs in \cref{subsec:non-archimedean,subsec:archimedean} might seem a bit wasteful, since they rely on upper-bounding Rankin--Selberg coefficients at a single positive integer $m$ (either $m = \ell$ or $m = 1$) by a sum of such coefficients over a long interval of size $M$. When $|\mF| = 1$, this reduces to the global arguments outlined in \cref{eq:simple-argument-finite,eq:simple-argument-infinite}, which recover the Jacquet--Shalika pointwise  bounds $|\Re\, \mu_{\pi,j}| \le \frac{1}{2}$.

The Landau--Serre method \cite{serreletter,landau1915anzahl} improved the Jacquet--Shalika pointwise bounds at finite places, using archimedean twists by $m^{it}$ with $|t| \le T$ (for some parameter $T$ to be optimized). Serre's original argument \cite{serreletter} relied on a theorem of Landau \cite{landau1915anzahl} for sums of Dirichlet coefficients with sharp cutoffs; a variant of this argument \cite[\S 7.1]{blomer2013role} uses a smooth sum over a short interval of length $M/T$, effectively amplifying the contribution of $m = \ell$ by a factor of $T$ compared to \cref{eq:simple-argument-finite}. One can then apply the (approximate) functional equation for each $L(s, \pi[it] \times \tilde\pi)$ rather than $L(s, \pi \times \tilde \pi)$. This decreases the main-term residue contribution by a factor of $T$, but increases the analytic conductor and the length of the dual sum by roughly $T^{n^2}$; notably, one 
can win an additional factor of $T^{1/2}$ from the $t$-oscillation of gamma factors in this process. Taking $\ell = p^k$ to be a large prime power, this leads to the bound $|\Re\, \mu_{\pi,j}(p)| \le \tfrac{1}{2} - \tfrac{1}{n^2+1}$ at unramified finite places.

There is also a non-archimedean version of this argument \cite[\S 7.1]{blomer2011ramanujan} based on an idea of Iwaniec \cite{iwaniec1996lowest}, which replaces a short interval of length $M/T$ by an arithmetic progression of modulus $q$, and $m^{it}$ (for $|t| \le T$) by $\chi(m)$ (for Dirichlet characters $\chi \mod q$). The saving of $T^{1/2}$ from the vertical oscillation of the gamma factors is then replaced by a gain of $q^{1/2}$ from Deligne's bound for hyper-Kloosterman sums modulo $q$. This produces bounds of the same strength as the Landau--Serre method at the finite places, but has the advantage of working equally well at the infinite place \cite{luo1995selberg}. 

Now, one can apply the same (archimedean or non-archimedean) amplification trick in the setting of a finite family $\mF \subset \mathfrak{F}_n$. In particular, rather than using a full interval of integers $m \sim M \approx \ell$ in \cref{subsec:non-archimedean}, one can bound
\[
    \sum_{\pi, \pi' \in \mF} w_\pi \bar w_{\pi'} \lambda_{\pi \times \tilde\pi'}(\ell)
    \le 
    \sum_{\substack{m \sim M \\ |m - \ell| < \frac{M}{T}}}
    \sum_{\pi, \pi' \in \mF} w_\pi \bar w_{\pi'} \lambda_{\pi \times \tilde\pi'}(m)
\]
or alternatively
\[
    \sum_{\pi, \pi' \in \mF} w_\pi \bar w_{\pi'} \lambda_{\pi \times \tilde\pi'}(\ell)
    \le
    \sum_{\substack{m \sim M \\ m \equiv \pm \ell \pmod{q}}}
    \sum_{\pi, \pi' \in \mF} w_\pi \bar w_{\pi'} \lambda_{\pi \times \tilde\pi'}(m).
\]
It turns out that this strategy does not improve our main result from \cref{thm:density} for general families $\mF$, but it can at least reconcile it with the pointwise bounds of Luo--Rudnick--Sarnak \cite{luo1995selberg}. The result of (the non-archimedean version of) this argument is given below.

\begin{proposition} \label{prop:density-amplif} Let $v$ be a place of $\Q$, $\mfC \ge 1$, and $\mF \subset \mathfrak{F}_n$ such that all $\pi \in \mF$ are unramified at $v$ and have total conductors $\mfC_\pi \le \mfC$ (as defined in \cref{eq:total-conductor}). Let $q \neq v$ be either $1$ or a prime with $q \nmid \prod_{\pi \in \mF} \mf_\pi$. Then for any $\theta \in (0, \tfrac{1}{2})$, one has 
\[
    \#\left\{ \pi \in \mF : \max_j |\Re\, \mu_{\pi,j}(v)| \ge \theta\right\} \ll_{v,\theta} \frac{1}{q} \left(\mfC^{2n}\, q^{n^2-1}\right)^{\frac{1-2\theta}{4\theta} + o(1)}.
\]
In fact, one can replace the factor of $\mfC^{2n}$ in the right-hand side with $\mfC_{RS} := \max_{\pi, \pi' \in \mF} \mfC_{\pi \times \tilde\pi'}$.
\end{proposition}

Before proving \cref{prop:density-amplif}, we remark that it is precisely equivalent to the combination of our density result from \cref{thm:density} and the best known pointwise bounds. Indeed, the optimal choice of the parameter $q$ in \cref{prop:density-amplif} will be either $q = 1$ (if the exponent of $q$ in the right-hand side is nonnegative) or $q \to \infty$ (if the exponent of $q$ is negative). The first case recovers our \cref{thm:density}, while the second case recovers the Luo--Rudnick--Sarnak bound \cite{luo1995selberg,luo1999generalized}.

\begin{corollary} \label{cor:LRS}
For any $\pi \in \mathfrak{F}_n$ which is unramified at a place $v$, $\max_j |\Re\, \mu_{\pi,j}(v)| \le \tfrac{1}{2} - \tfrac{1}{n^2+1}$.
\end{corollary}

\begin{proof}[Proof of \cref{cor:LRS} assuming \cref{prop:density-amplif}]
Let $\eps > 0$ and apply \cref{prop:density-amplif} for $\mF = \{\pi\}$ and $\theta := \tfrac{1}{2} - \tfrac{1}{n^2+1} + \eps$. Then the quantity
\begin{equation} \label{eq:rewrite-q-expo}
    (n^2-1)\frac{1-2\theta}{4\theta} - 1
    =
    \frac{n^2+1}{2\theta}\left(\frac{1}{2} - \frac{1}{n^2+1} - \theta \right)
\end{equation}
is strictly negative,
and one can take $q \to \infty$ to conclude that $\#\left\{ \pi \in \mF : \max_j |\Re\, \mu_{\pi,j}(v)| \ge \theta\right\} = 0$. It follows that $\max_j |\Re\, \mu_{\pi,j}(v)| < \tfrac{1}{2} - \tfrac{1}{n^2+1} + \eps$ for any $\eps > 0$.
\end{proof}

\begin{remark}
The original argument of Luo--Rudnick--Sarnak \cite{luo1995selberg} establishes \cref{cor:LRS} using an average over moduli $q \sim Q$, following a trick of Duke--Iwaniec \cite{duke1989estimates}. This removes the need for having positive Dirichlet coefficients (and works equally well for symmetric square $L$-functions \cite[Appendix 2]{kim2003functoriality}), but it is also less compatible with our approach (which fundamentally relies on a positivity property for families of Rankin--Selberg $L$-functions). The fact that a single modulus $q$ can be used to recover \cref{cor:LRS} was also noted by Blomer--Brumley \cite{blomer2013nonvanishing}.
\end{remark}

The proof of \cref{prop:density-amplif} requires a twisted analogue of \cref{lem:rs-convexity}. To state this, we construct a majorant for the indicator function of a residue class $r \in (\Z/q\Z)^\times$, where $q$ is prime, by
\begin{equation} \label{eq:cq-majorant}
\begin{aligned}
    c_q(m, r) := \frac{2}{q-1}\Bigg(1 + \sum_{\substack{\chi \pmod{q} \\ \text{even primitive}}} \bar\chi(r) \chi(m) \Bigg)&
    \\
    \ge 
    \frac{2}{q-1}
    \sum_{\substack{\chi \pmod{q} \\ \text{even}}} \bar\chi(r) \chi(m)& 
    =
    \one_{m \equiv \pm r \pmod{q}}.
\end{aligned}
\end{equation}

\begin{lemma}[Smooth sums of Rankin--Selberg coefficients with unramified twists] \label{lem:rs-convexity-modified}
Assume the setup of \cref{lem:rs-convexity}. Let $q \nmid \mf_\pi \mf_{\pi'}$ be a prime and $r \in (\Z/q\Z)^\times$. Then one has
\[
    \sum_{m=1}^\infty c_q(m,r)\, \Phi\left(\frac{m}{M}\right) \lambda_{\pi \times \tilde{\pi}'}(m)
    \ll
    (Mq \mfC_{\pi \times \tilde\pi'})^{o(1)}
    \left(
    \frac{M}{q} \one_{\pi = \pi'}
    +
    \sqrt{\mfC_{\pi \times \tilde\pi'} q^{n^2-1}}\right).
\]
\end{lemma}

\begin{proof}
After expanding $c_q(m,r)$, the left-hand side becomes
\[
    \frac{2}{q-1} \left(\sum_{m=1}^\infty \Phi\left(\frac{m}{M}\right) \lambda_{\pi \times \tilde{\pi}'}(m)
    +
    \sum_{\substack{\chi \pmod{q} \\ \text{even primitive}}} \bar\chi(r)
    \sum_{m=1}^\infty \Phi\left(\frac{m}{M}\right) \lambda_{(\pi \otimes \chi) \times \tilde{\pi}'}(m)
    \right).
\]
The first sum is bounded appropriately by \cref{lem:rs-convexity}. For the second sum, we follow the proof of \cref{lem:rs-convexity} with $L(s, \pi \times \tilde \pi')$ replaced by $L(s, (\pi \otimes \chi) \times \tilde\pi')$. Note that there are no poles involved, since $q \mid \mf_{\pi \otimes \chi}$ (by \cref{eq:twisted-epsilon}), but $q \nmid \mf_{\pi'}$. After shifting the contour of integration, rather than using \cref{eq:infty-quotients-rs} directly, we also use \cref{lem:twisted-epsilon-rs} to write
\[
\begin{aligned}
    \sum_{\substack{\chi \pmod{q} \\ \text{even prim.}}} \bar\chi(r)
    \frac{L(s,(\pi \otimes \chi) \times \tilde\pi')}{L(1-s,(\tilde\pi \otimes \bar\chi) \times \pi'))}
    =
    \sum_{\substack{\chi \pmod{q} \\ \text{even prim.}}} \bar\chi(r)
    \epsilon(s, (\pi \otimes \chi) \times \tilde\pi')
    \frac{L_\infty(1-s, (\tilde\pi \otimes \bar\chi) \times \pi')}{L_\infty(s,(\pi \otimes \chi) \times \tilde\pi')}
    \\
    = 
    q^{-n^2s} \eta_\pi(q) \bar\eta_{\pi'}(q)
    \epsilon(s, \pi \times \tilde\pi')
    \frac{L_\infty(1-s, \tilde\pi \times \pi')}{L_\infty(s, \pi \times \tilde\pi')}
    \sum_{\substack{\chi \pmod{q} \\ \text{even primitive}}}
    \bar\chi(r) \tau_\chi^{n^2} \chi(\mf_{\pi \times \tilde\pi'})
    \\
    \ll 
    (1 + |s|)^{O(1)} \left(\mfC_{\pi \times \tilde\pi'} q^{n^2}\right)^{\frac{1}{2}-\Re\, s}
    \left\vert 
    \sum_{\substack{\chi \pmod{q} \\ \text{even prim.}}}
    \bar\chi(r) \left(\frac{\tau_\chi}{\sqrt{q}}\right)^{n^2} \chi(\mf_{\pi \times \tilde\pi'})
    \right\vert.
\end{aligned}
\]
One then achieves square-root cancellation ($\ll \sqrt{q}$) in the sum over Dirichlet characters by plugging in \cite[(3.18)]{luo1995selberg}, which stems from Deligne's bound for hyper-Kloosterman sums. When $\Re\, s = -\eps$, we obtain
\[
    \frac{2}{q-1} \sum_{\substack{\chi \pmod{q} \\ \text{even prim.}}} \bar\chi(r)
    \frac{L(s,(\pi \otimes \chi) \times \tilde\pi')}{L(1-s,(\tilde\pi \otimes \bar\chi) \times \pi'))}
    \ll_\eps (1 + |s|)^{O(1)} \left(\mfC_{\pi \times \tilde\pi'} q^{n^2-1} \right)^{\tfrac{1}{2}+\eps},
\]
which leads to the desired bound.
\end{proof}

\begin{proof}[Proof of \cref{prop:density-amplif}]
We focus on the case $q > 1$, since the case $q = 1$ is simpler (and was already covered by \cref{thm:density}). Let $r \in (\Z/q\Z)^\times$.

The only significant change from the proof of \cref{thm:density} is that we use \cref{lem:rs-convexity-modified} instead of \cref{lem:rs-convexity}. This directly leads to a modified version of \cref{prop:rs-triple-sums} for families of representations which are unramified at $q$, where the conclusion \cref{eq:rs-triple-sums-non-def} is replaced by
\begin{equation} \label{eq:rs-triple-sums-non-def-modified}
\begin{aligned}
    \sum_{\pi, \pi' \in \mF} w_{\pi, \pi'} \sum_{m \le M} u_m\, c_q(m, r)
    \lambda_{\pi \times \tilde\pi'}(m) \left(\frac{M}{m}\right)^{\beta_\pi + \bar\beta_{\pi'}}
    &\ll
    (Mq\mfC_{RS})^{o(1)}\, 
    \|w\|_\infty
    |\mF|^2\, \|u\|_\infty\, M 
    \\
    &\times \left(q^{-1}|\mF|^{-1} + M^{-1}\sqrt{\mfC_{RS} q^{n^2-1}} \right).
\end{aligned}
\end{equation}
We then use the bound \cref{eq:rs-triple-sums-non-def-modified} in place of \cref{eq:rs-triple-sums-non-def} in the proofs from \cref{subsec:non-archimedean} (resp., \cref{subsec:archimedean}), for the same sequences $(u_m)$ supported on $m = \ell$ (resp., $m = 1$), and the choice $r = \ell$ (resp., $r = 1$). The bounds \cref{eq:after-turan-2,eq:optimize-archimedean} now become
\begin{equation} \label{eq:optimize-amplif}
\begin{aligned}
    \frac{1}{k_0^{2n+1}} p^{2k_0\theta}
    &\ll_p 
    (p^{k_0} q\mfC_{RS})^{o(1)} \left(\frac{p^{k_0}}{q|\mF|} + \sqrt{\mfC_{RS} q^{n^2-1}} \right),
    \\
    \ell^{2\theta}
    &\ll 
    (\ell q\mfC_{RS})^{o(1)} \left(\frac{\ell}{q|\mF|} +  \sqrt{\mfC_{RS} q^{n^2-1}} \right),
\end{aligned}
\end{equation}
respectively. Finally, choosing $p^{k_0} \asymp_p q|\mF| \sqrt{\mfC_{RS} q^{n^2-1}} \asymp \ell$ completes the proof of \cref{prop:density-amplif}.
\end{proof}

\begin{remark}
When $v = p < \infty$, the dependency of the implied constant in \cref{thm:density} on $p$ may be important for some applications. This dependency arises partly through the optimization in $\ell = p^k$ (which only allows for multiplicative jumps of size $p$), and could be improved when the parameter $q$ is also available for optimization as in \cref{eq:optimize-amplif}.
\end{remark}

\subsection{Amplification at ramified primes and the proof of \cref{thm:density-blomer-family}} \label{subsec:amplif-ram}
The proof of
\cref{prop:density-amplif} used twists by characters mod $q$, where $q$ is an unramified prime for all $\pi \in \mF$. For special families, it can be fruitful to use twists at the ramified primes instead, due to a conductor stability phenomenon. This leads to the following bound, which effectively removes the factor $\mfC^{2n}$ from the upper bound in \cref{prop:density-amplif}.

\begin{proposition} \label{prop:density-amplif-ramified} Let $v$ be a place of $\Q$, $q \neq v$ be a prime, $I \subset [0, \infty)$ be a fixed compact interval, and $\mF_I(q)$ be the family from \cref{thm:density}. Then for any $\theta \in (0, \tfrac{1}{2})$, one has 
\[
    \#\left\{ \pi \in \mF_I(q) : \max_j |\Re\, \mu_{\pi,j}(v)| \ge \theta\right\} \ll_{v,I,\theta} \frac{1}{q} \left(q^{n^2-1}\right)^{\frac{1-2\theta}{4\theta} + o(1)}.
\]
\end{proposition}

Like \cref{prop:density-amplif}, \cref{prop:density-amplif-ramified} requires a modified version of \cref{lem:rs-convexity}.

\begin{lemma}[Smooth sums of Rankin--Selberg coefficients with ramified twists] \label{lem:rs-convexity-modified-ramified}
Let $q$ be a prime, and assume the setup of \cref{lem:rs-convexity} for some $\pi, \pi' \in \mF_I(q)$. Let $r \in (\Z/q\Z)^\times$. Then with the notation from \cref{eq:cq-majorant}, one has
\[
    \sum_{m=1}^\infty c_q(m,r)\, \Phi\left(\frac{m}{M}\right) \lambda_{\pi \times \tilde{\pi}'}(m)
    \ll
    (Mq)^{o(1)}
    \left(
    \frac{M}{q}\one_{\pi = \pi'}
    +
    \sqrt{q^{n^2-1}}\right).
\]
\end{lemma}

\begin{proof}
As in the proof of \cref{lem:rs-convexity-modified}, we expand the left-hand side and reduce to establishing the two bounds
\begin{equation} \label{eq:required-bound-ram-twist-1}
    \frac{2}{q-1} \sum_{m=1}^\infty \Phi\left(\frac{m}{M}\right) \lambda_{\pi \times \tilde{\pi}'}(m)
    \ll
    (Mq)^{o(1)}
    \left(
    \frac{M}{q}\one_{\pi = \pi'}
    +
    \sqrt{q^{n^2-1}}\right),
\end{equation}
\begin{equation} \label{eq:required-bound-ram-twist-2}
    \frac{2}{q-1} \sum_{\substack{\chi \pmod{q} \\ \text{even primitive}}} \bar\chi(r)
    \sum_{m=1}^\infty \Phi\left(\frac{m}{M}\right) \lambda_{(\pi \otimes \chi) \times \tilde{\pi}'}(m)
    \ll (Mq)^{o(1)} \sqrt{q^{n^2-1}}.
\end{equation}
The claim in \cref{eq:required-bound-ram-twist-1} follows from \cref{lem:rs-convexity} and the bounds
\begin{equation} \label{eq:RS-conductor-blomer}
    \mfC_{\pi \times \tilde\pi'} \ll \mf_{\pi \times \tilde\pi'} \le q^{2n-2},
\end{equation}
which use the fact that all local parameters of $\pi, \pi'$ at $\infty$ are $O(1)$, as well as the improved Bushnell--Henniart bounds for Rankin--Selberg conductors when $\omega_\pi \bar\omega_{\pi'}$ is unramified \cite[Appendix B]{brumley2022zeros}.

To establish \cref{eq:required-bound-ram-twist-2}, we follow the proof of \cref{lem:rs-convexity} with $L(s, (\pi \otimes \chi) \times \tilde\pi')$ in place of $L(s, \pi \times \tilde \pi')$. Once again, there are no poles involved, since $(\pi \otimes \chi)[it] \neq \pi'$ for all $t \in \R$ (this can be seen, e.g., by comparing arithmetic conductors of the Rankin--Selberg convolution with $\tilde\pi'$: one has $\mf_{(\pi \otimes \chi) \times \tilde\pi'} = q^{n^2}$ by \cref{lem:twisted-epsilon-rs-ram}, but $\mf_{\pi' \times \tilde\pi'} \le q^{2n-2}$ by \cite[Appendix B]{brumley2022zeros}). After shifting contours, rather than using \cref{eq:infty-quotients-rs}, we use \cref{lem:twisted-epsilon-rs-ram} and the fact that all local parameters of $\pi, \pi'$ at $\infty$ are $O(1)$ to write
\[
\begin{aligned}
    \sum_{\substack{\chi \pmod{q} \\ \text{even prim.}}} \bar\chi(r)
    \frac{L(s,(\pi \otimes \chi) \times \tilde\pi')}{L(1-s,(\tilde\pi \otimes \bar\chi) \times \pi'))}
    =
    \sum_{\substack{\chi \pmod{q} \\ \text{even prim.}}} \bar\chi(r)
    \epsilon(s, (\pi \otimes \chi) \times \tilde\pi')
    \frac{L_\infty(1-s, (\tilde\pi \otimes \bar\chi) \times \pi')}{L_\infty(s,(\pi \otimes \chi) \times \tilde\pi')}
    \\
    = 
    q^{-n^2s} \tau_{\pi \times \tilde\pi'}(\infty)
    \frac{L_\infty(1-s, \tilde\pi \times \pi')}{L_\infty(s, \pi \times \tilde\pi')}
    \sum_{\substack{\chi \pmod{q} \\ \text{even primitive}}}
    \bar\chi(r) \tau_\chi^{n^2}
    \\
    \ll 
    (1 + |s|)^{O(1)} \left(q^{n^2}\right)^{\frac{1}{2}-\Re\, s}
    \left\vert 
    \sum_{\substack{\chi \pmod{q} \\ \text{even prim.}}}
    \bar\chi(r) \left(\frac{\tau_\chi}{\sqrt{q}}\right)^{n^2}
    \right\vert.
\end{aligned}
\]
By \cite[(3.18)]{luo1995selberg} (which stems from Deligne's bound for hyper-Kloosterman sums), the inner sum is $O(\sqrt{q})$. When $\Re\, s = -\eps$, we conclude that
\[
    \frac{2}{q-1} \sum_{\substack{\chi \pmod{q} \\ \text{even prim.}}} \bar\chi(r)
    \frac{L(s,(\pi \otimes \chi) \times \tilde\pi')}{L(1-s,(\tilde\pi \otimes \bar\chi) \times \pi'))}
    \ll_\eps (1 + |s|)^{O(1)} \left(q^{n^2-1} \right)^{\tfrac{1}{2}+\eps},
\]
which leads to \cref{eq:required-bound-ram-twist-2} as in the proof of \cref{lem:rs-convexity}.
\end{proof}

\begin{proof}[Proof of \cref{prop:density-amplif-ramified}]
Let $r \in (\Z/q\Z)^\times$, and recall from \cref{eq:cq-majorant} that $c_q(m, r)$ is a majorant for the indicator function $\one_{m \equiv \pm r \pmod{q}}$.

We follow the proof of \cref{thm:density} using \cref{lem:rs-convexity-modified-ramified} instead of \cref{lem:rs-convexity}. This leads to a modified version of \cref{prop:rs-triple-sums} for families $\mF \subset \mF_I(q)$, where \cref{eq:rs-triple-sums-non-def} is replaced by
\begin{equation} \label{eq:rs-triple-sums-non-def-modified-ramified}
\begin{aligned}
    \sum_{\pi, \pi' \in \mF} w_{\pi, \pi'} \sum_{m \le M} u_m\, c_q(m, r)
    \lambda_{\pi \times \tilde\pi'}(m) \left(\frac{M}{m}\right)^{\beta_\pi + \bar\beta_{\pi'}}
    &\ll
    (Mq)^{o(1)}\, 
    \|w\|_\infty
    |\mF|^2\, \|u\|_\infty\, M 
    \\
    &\times \left(q^{-1}|\mF|^{-1} + M^{-1}\sqrt{ q^{n^2-1}} \right).
\end{aligned}
\end{equation}

We then use  \cref{eq:rs-triple-sums-non-def-modified-ramified} in place of \cref{eq:rs-triple-sums-non-def} in the proofs from \cref{subsec:non-archimedean} (resp., \cref{subsec:archimedean}), for the same sequences $(u_m)$ supported on $m = \ell$ (resp., $m = 1$), and $r = \ell$ (resp., $r = 1$). The bounds in \cref{eq:after-turan-2,eq:optimize-archimedean} become
\[
\begin{aligned}
    \frac{1}{k_0^{2n+1}} p^{2k_0\theta}
    &\ll_p 
    (p^{k_0}q)^{o(1)} \left(\frac{p^{k_0}}{q|\mF|} + \sqrt{q^{n^2-1}} \right),
    \\
    \ell^{2\theta}
    &\ll 
    (\ell q)^{o(1)} \left(\frac{\ell}{q|\mF|} +  \sqrt{q^{n^2-1}} \right),
\end{aligned}
\]
respectively. Choosing $p^{k_0} \asymp_p q|\mF| \sqrt{q^{n^2-1}} \asymp \ell$ completes the proof of \cref{prop:density-amplif-ramified}.
\end{proof}

We can finally establish \cref{thm:density-blomer-family}.

\begin{proof}[Proof of \cref{thm:density-blomer-family}]
The first bound in \cref{eq:density-blomer-family-1} (using the first term from the minimum) is simply \cref{thm:density} combined with \cref{eq:RS-conductor-blomer}. The second bound in \cref{eq:density-blomer-family-1} (using the second term from the minimum) is just \cref{prop:density-amplif-ramified}, after rewriting the exponent of $q$ as in \cref{eq:rewrite-q-expo}.
\end{proof}

\textbf{Acknowledgements.}
The authors would like to thank James Maynard for many fruitful conversations, as well as Valentin Blomer, Farrell Brumley, Daniel Bump, Peter Humphries, Peter Sarnak, Kannan Soundararajan, Jesse Thorner, and the referees, for helpful comments and suggestions. We are also very grateful to Edgar Assing for detailed discussions on the local computation from \cref{subsec:twist-ram-chars}. The first author was supported by an NSF Postdoctoral Fellowship. The second author was supported by an EPSRC Scholarship, as well as through EXC-2047/1-390685813 and by ERC Advanced Grant 101054336.

\appendix

\section{Some local computations} \label{sec:ramified-primes}

The computations in this appendix are based on the Bernstein--Zelevinsky classification of generic, unitary, irreducible, smooth representations of $\GL_n(\Q_p)$ where $p$ is a prime \cite[\S 14.6]{goldfeld2011automorphicII}, and on the corresponding expressions for local factors of Rankin--Selberg $L$-functions due to Jacquet, Piatetskii-Shapiro and Shalika \cite{jacquet1983rankin}.

\subsection{Positive semi-definiteness} \label{subsec:pos-def-ram-primes}
Here we complete the proof of \cref{prop:rs-nonnegative-definite}, by showing that $(\log L_p(s, \pi \times \tilde\pi'))_{\pi, \pi' \in \mF}$ forms a positive semi-definite family for an arbitrary given prime $p$ (which may be ramified for some $\pi \in \mF$). We assume the setup of \cref{prop:rs-nonnegative-definite} and follow the computations of Rudnick--Sarnak \cite[\S 5]{rudnick1996zeros}; we also point the reader again to the closely-related computations of Brumley in \cite[Appendix]{soundararajan2019weak}.

As in \cite[(5.1)]{rudnick1996zeros}, we can write the local component $\pi_p$ of some $\pi \in \mF$ as a Langlands quotient, the unique irreducible quotient of the induced representation
\[
    \Ind\left(\GL_n, P_\pi; \left(\sigma_{\pi,j}[t_{\pi,j}] \right)_{j = 1}^{J_{\pi}} \right),
\]
where $P_\pi$ is a standard parabolic subgroup of type $(n_{\pi,j})_{j=1}^{J_\pi}$, $\sigma_{\pi,j}$ are unitary \emph{tempered} representations of $\GL_{n_{\pi,j}}$, and $t_{\pi,j}\in\R$ are the Langlands parameters. Here we recall the notation $\sigma[t] := \sigma \otimes |\cdot|^t$ which also applies locally over $\Q_p$; since $\pi$ is unitary, we also have 
$\left\{\sigma_{\pi,j}[t_{\pi,j}]\right\} 
= \left\{\sigma_{\pi,j}[-t_{\pi,j}]\right\}$.
    
Furthermore, as in \cite[(5.3)]{rudnick1996zeros} (see also \cite[Theorem 14.6.5]{goldfeld2011automorphicII}), each tempered $\sigma_{\pi,j}$ is given as an induced representation
\[
    \Ind\left(\GL_{n_{\pi,j}}, P_{\pi,j}; \left(\tau_{\pi,j,k}\right)_{k=1}^{K_{\pi,j}}\right),
\]
where $P_{\pi,j}$ a standard parabolic subgroup of type $(n_{\pi,j,k})_{k=1}^{K_{\pi,j}}$, and $\tau_{\pi,j,k}$ are unitary \emph{square-integrable} (or `\emph{discrete series}') representations of $\GL_{n_{\pi,j,k}}$. In turn, as in \cite[\S 5.2]{rudnick1996zeros} (see also \cite[Theorem 14.6.4]{goldfeld2011automorphicII}), each $\tau_{\pi,j,k}$ is the unique square-integrable constituent of the induced representation
\[
    \Ind\left(\GL_{n_{\pi,j,k}}, P_{\pi,j,k}; (\rho_{\pi,j,k}[\ell - \tfrac{L_{\pi,j,k}+1}{2}])_{\ell=1}^{L_{\pi,j,k}} \right),
\]
for some $L_{\pi,j,k}\mid n_{\pi,j,k}$, and $P_{\pi,j,k}$ is a standard parabolic subgroup of type $(d, \ldots, d)$ with $d= d_{\pi,j,k} := n_{\pi,j,k}/L_{\pi,j,k}$, and $\rho_{\pi,j,k}$ is a unitary \emph{super-cuspidal} representation of $\GL_{d_{\pi,j,k}}$. We can write 
\begin{equation} \label{eq:square-int-split-notation} 
    \tau_{\pi,j,k} = \Delta(L_{\pi,j,k}, \rho_{\pi,j,k}),
\end{equation} 
and the contragredient of such a representation $\Delta(L, \rho)$ is $\tilde{\Delta}(L, \rho) = \Delta(L, \tilde{\rho})$.

Then as in \cite[\S 5.2]{rudnick1996zeros}, the local factor at $p$ of $L(s, \pi)$ splits as a product
\begin{equation} \label{eq:local-rep-split}
\begin{aligned}
    L_p(s, \pi) 
    &= 
    \prod_{j=1}^{J_\pi} L(s+t_{\pi,j}, \sigma_{\pi,j})
    = 
    \prod_{j=1}^{J_\pi} \prod_{k=1}^{K_\pi} 
    L(s+t_{\pi,j}, \tau_{\pi,j,k})
    \\
    &= 
    \prod_{j=1}^{J_\pi} \prod_{k=1}^{K_\pi} 
    L(s+t_{\pi,j}, \Delta(L_{\pi,j,k}, \rho_{\pi,j,k}))
    = 
    \prod_{j=1}^{J_\pi} \prod_{k=1}^{K_\pi} 
    L\left(s+t_{\pi,j}+\tfrac{L_{\pi,j,k}-1}{2},\rho_{\pi,j,k}\right).
\end{aligned}
\end{equation}
For unitary supercuspidal representations, we have
\[
    L(s, \rho_{\pi,j,k}) = 
    \begin{cases}
        \left(1 - p^{-(s+iu)}\right)^{-1}
        & \textnormal{if} \ d_{\pi,j,k}=1,\,\rho = |\cdot|^{iu},\ u \in \R,\\
        1 & \textnormal{else}.
    \end{cases}
\]
Similarly, as in \cite[\S 5.2]{rudnick1996zeros} (see also \cite[(9) and Theorems (9.5), (8.2)]{jacquet1983rankin}), the local factor of the Rankin--Selberg $L$-function of $\pi, \pi' \in \mF$ factors as
\begin{equation} \label{eq:local-rs-split}
\begin{aligned}
    &L_p\left(s, \pi \times \tilde{\pi}'\right)
    \\
    &=
    \prod_{j=1}^{J_\pi}
    \prod_{j'=1}^{J_{\pi'}} L(s+t_{\pi,j}- t_{\pi',j'}, \sigma_{\pi,j} \times \tilde\sigma_{\pi',j'})
    \\
    &=
    \prod_{j=1}^{J_\pi}
    \prod_{k=1}^{K_{\pi,j}}
    \prod_{j'=1}^{J_{\pi'}}
    \prod_{k'=1}^{K_{\pi',j'}}
    L\left(s+t_{\pi,j}-t_{\pi',j'}, \tau_{\pi,j,k}\times \tilde\tau_{\pi',j',k'}\right)
    \\
    &= 
    \prod_{j=1}^{J_\pi}
    \prod_{k=1}^{K_{\pi,j}}
    \prod_{j'=1}^{J_{\pi'}}
    \prod_{k'=1}^{K_{\pi',j'}}
    L\left(s+t_{\pi,j}-t_{\pi',j'}, \Delta(L_{\pi,j,k}, \rho_{\pi,j,k}) \times \Delta(L_{\pi',j',k'}, \tilde\rho_{\pi',j',k'})\right)
    \\
    &=
    \prod_{j=1}^{J_\pi}
    \prod_{k=1}^{K_{\pi,j}}
    \prod_{j'=1}^{J_{\pi'}}
    \prod_{k'=1}^{K_{\pi',j'}}
    \prod_{\ell = 1}^{\min(L_{\pi,j,k}, L_{\pi',j',k'})}
    L\left(s+t_{\pi,j}-t_{\pi',j'} + \frac{L_{\pi,j,k} + L_{\pi',j',k'}}{2} - \ell, \rho_{\pi,j,k}\times \tilde\rho_{\pi',j',k'}\right),
\end{aligned}
\end{equation}
where we expanded the Rankin--Selberg factor $L(s, \Delta(L, \rho) \times \Delta(L', \rho'))$ for square-integrable representations as in \cite[(5.5)]{rudnick1996zeros}. For unitary supercuspidal representations, we have
\begin{equation} \label{eq:rs-supercuspidal}
    L(s, \rho \times \tilde \rho') = 
    \begin{cases}
        \left(1 - p^{-r(s+iu)}\right)^{-1}
        & \textnormal{if}\ \rho \simeq \rho'[iu] \text{ for some } u \in \R,\\
        1 & \textnormal{otherwise},
    \end{cases}
\end{equation}
where in the first case, $r = r(\rho)$ is the order of the cyclic group of characters $|\cdot|^{iv}$ such that $\rho \simeq \rho[iv]$; note that this $r$ only depends on the twist class of $\rho$ among unitary supercuspidals, 
\[
    [\rho] := \{\rho[iv] : v \in \R\},
\]
so we may write $r = r_{[\rho]}$.

In particular, the product in \cref{eq:local-rs-split} will only pick up factors where $\rho_{\pi,j,k}$ and $\tilde{\rho}_{\pi',j',k'}$ are in the same twist class, so it is natural to split it into a (finite) product over all twist classes $[\rho]$ of the unitary supercuspidals that arise in \cref{eq:local-rep-split} for some $\pi \in \mF$. This yields
\begin{equation} \label{eq:split-twist-classes}
    L_p(s, \pi \times \tilde{\pi}') = \prod_{[\rho]} L_{[\rho]}(s, \pi \times \tilde{\pi}'),
\end{equation}
where by \cref{eq:local-rs-split} and \cref{eq:rs-supercuspidal},
\[
\begin{aligned}
    &L_{[\rho]}(s, \pi \times \tilde{\pi}') :=
    \\
    &
    \prod_{\substack{1 \le j \le J_\pi \\ 1 \le k \le K_{\pi,j} \\ \rho_{\pi,j,k} = \rho[iu_{\pi,j,k}]}}
    \,
    \prod_{\substack{1 \le j' \le J_{\pi'} \\ 1 \le k \le K_{\pi',j'} \\ \rho_{\pi',j',k'} = \rho[iu_{\pi',j',k'}]}}
    \prod_{\ell = 1}^{\min(L_{\pi,j,k}, L_{\pi',j',k'})}
    \left(1 - p^{-r_{[\rho]}\left(s+s_{\pi,j,k}-s_{\pi',j',k'} + \frac{L_{\pi,j,k} + L_{\pi',j',k'}}{2} - \ell \right)}\right)^{-1},
\end{aligned}
\]
for $s_{\pi,j,k} := t_{\pi,j} + iu_{\pi,j,k}$; this corresponds to \cite[(5.9)]{rudnick1996zeros}. Note that for each $[\rho]$ and each $\pi$, by unitarity, we have $\{(s_{\pi,j,k}, L_{\pi,j,k}) : \rho_{\pi,j,k} \in [\rho]\} = \{(-\bar s_{\pi,j,k}, L_{\pi,j,k}) : \rho_{\pi,j,k} \in [\rho]\}$. Thus we can rewrite
\[
\begin{aligned}
    &L_{[\rho]}(s, \pi \times \tilde{\pi}') =
    \\
    &
    \prod_{\substack{1 \le j \le J_\pi \\ 1 \le k \le K_{\pi,j} \\ \rho_{\pi,j,k} \in [\rho]}}
    \,
    \prod_{\substack{1 \le j' \le J_{\pi'} \\ 1 \le k \le K_{\pi',j'} \\ \rho_{\pi',j',k'} \in [\rho]}}
    \prod_{\ell = 1}^{\min(L_{\pi,j,k}, L_{\pi',j',k'})}
    \left(1 - p^{-r_{[\rho]}\left(s+s_{\pi,j,k}+ \bar s_{\pi',j',k'} + \frac{L_{\pi,j,k} + L_{\pi',j',k'}}{2} - \ell \right)}\right)^{-1},
\end{aligned}
\]
By \cref{eq:split-twist-classes} and \cref{lem:properties-nonnegative-definite}, it suffices to prove that for each class $[\rho]$, the family $(\log L_{[\rho]}(s, \pi \times \tilde{\pi}'))_{\pi,\pi' \in \mF}$ is positive semi-definite. In fact, we can further split
\[
    L_{[\rho]}(s, \pi \times \tilde{\pi}')
    =
    \prod_{\ell=1}^\infty L_{[\rho],\ell}(s, \pi \times \tilde{\pi}'),
\]
where
\[
\begin{aligned}
    &L_{[\rho],\ell}(s, \pi \times \tilde{\pi}') :=
    \\
    &
    \prod_{\substack{1 \le j \le J_\pi \\ 1 \le k \le K_{\pi,j} \\ \rho_{\pi,j,k} \in [\rho] \\ L_{\pi,j,k} \ge \ell}}
    \,
    \prod_{\substack{1 \le j' \le J_{\pi'} \\ 1 \le k \le K_{\pi',j'} \\ \rho_{\pi',j',k'} \in [\rho] \\ L_{\pi',j',k'} \ge \ell}}
    \left(1 - p^{-r_{[\rho]}\left(s+s_{\pi,j,k}+\bar s_{\pi',j',k'} + \frac{L_{\pi,j,k} + L_{\pi',j',k'}}{2} - \ell \right)}\right)^{-1},
\end{aligned}
\]
and it suffices to prove that $(\log L_{[\rho],\ell}(s, \pi \times \tilde{\pi}'))_{\pi,\pi' \in \mF}$ is positive semi-definite for each $\ell$. One can expand the formal logarithm as
\[
\begin{aligned}
    \log L_{[\rho],\ell}(s, \pi \times \tilde{\pi}')
    &= \sum_{\substack{1 \le j \le J_\pi \\ 1 \le k \le K_{\pi,j} \\ \rho_{\pi,j,k} \in [\rho] \\ L_{\pi,j,k} \ge \ell}}
    \,
    \sum_{\substack{1 \le j' \le J_{\pi'} \\ 1 \le k \le K_{\pi',j'} \\ \rho_{\pi',j',k'} \in [\rho] \\ L_{\pi',j',k'} \ge \ell}}
    -\log \left(1 - p^{-r_{[\rho]}\left(s+s_{\pi,j,k}+\bar s_{\pi',j',k'} + \frac{L_{\pi,j,k} + L_{\pi',j',k'}}{2} - \ell \right)}\right)
    \\
    &= \sum_{\substack{1 \le j \le J_\pi \\ 1 \le k \le K_{\pi,j} \\ \rho_{\pi,j,k} \in [\rho] \\ L_{\pi,j,k} \ge \ell}}
    \,
    \sum_{\substack{1 \le j' \le J_{\pi'} \\ 1 \le k \le K_{\pi',j'} \\ \rho_{\pi',j',k'} \in [\rho] \\ L_{\pi',j',k'} \ge \ell}}
    \sum_{q \ge 1} \frac{1}{q} p^{-qr_{[\rho]}\left(s+s_{\pi,j,k}+\bar s_{\pi',j',k'} + \frac{L_{\pi,j,k} + L_{\pi',j',k'}}{2} - \ell \right)},
\end{aligned}
\]
which can be rearranged to
\[
    \sum_{q \ge 1} \frac{1}{q} p^{-qr_{[\rho]}(s-\ell)} \sum_{\substack{1 \le j \le J_\pi \\ 1 \le k \le K_{\pi,j} \\ \rho_{\pi,j,k} \in [\rho] \\ L_{\pi,j,k} \ge \ell}}
    p^{-qr_{[\rho]}\left(s_{\pi,j,k} + \frac{L_{\pi,j,k}}{2} \right)}
    \bar{ \sum_{\substack{1 \le j' \le J_{\pi'} \\ 1 \le k \le K_{\pi',j'} \\ \rho_{\pi',j',k'} \in [\rho] \\ L_{\pi',j',k'} \ge \ell}}
    p^{-qr_{[\rho]}\left(s_{\pi',j',k'} + \frac{L_{\pi',j',k'}}{2} \right)} }.
\]
But this is a positive-linear combination of terms $w_{k,\pi} \bar w_{k,\pi'}\, p^{-ks}$, and thus positive semi-definite. This completes our proof.

\subsection{Twists by ramified characters} \label{subsec:twist-ram-chars}

Here we complete the proof of \cref{lem:twisted-epsilon-rs-ram}, by establishing \cref{eq:ram-twist-local}. Let $q$ be a prime and $\chi$ be a primitive even Dirichlet character mod $q$. We abuse notation slightly by writing $\chi$ for the local character induced by $\chi$ on $\GL_1(\Q_q)$; we also write $\one$ for the trivial character on either $\GL_1(\A_\Q)$ or $\GL_1(\Q_q)$ depending on the context. The local $\epsilon$-factors will implicitly depend on an everywhere-normalized additive character $\psi = \prod_v \psi_v$ for $\A_\Q/\Q$.

Let $\pi, \pi'$ be cuspidal automorphic representations of $\GL_n(\A_\Q)$ with
\begin{equation} \label{eq:nice-cond-and-char}
    \mf_\pi = \mf_{\pi'} = q,
    \qquad\qquad 
    \omega_\pi = \omega_{\pi'} = \one.
\end{equation}

We must compute the local factors $L_q(s, (\pi \otimes \chi) \times \tilde \pi')$ and $\epsilon_q(s, (\pi \otimes \chi) \times \tilde\pi')$.
To this end, we express the local components $\pi_q$, $\pi_q'$ as the unique irreducible quotients of the induced representations
\[
    \Ind(\GL_n, P; (\sigma_j[t_j])_{j=1}^J),
    \qquad\qquad 
    \Ind(\GL_n, P'; (\sigma'_{j'}[t'_{j'}])_{j'=1}^{J'}),
\]
where $P, P'$ are standard parabolic subgroups of types $(n_j)_{j=1}^J$, $(n'_{j'})_{j'=1}^{J'}$, the unitary representations $\sigma_{j}$, $\sigma'_{j'}$ of $\GL_{n_j}$, $\GL_{n'_{j'}}$ are \emph{tempered}, and $t_j, t'_{j'} \in \R$. 

As in \cref{subsec:pos-def-ram-primes}, by \cite[Theorem 14.6.5]{goldfeld2011automorphicII}, we can further express each $\sigma_j$ and $\sigma_{j'}'$ as the induced representations
\[
    \Ind\left(\GL_{n_j}, P_j; (\tau_{j,k})_{k=1}^{K_j}\right),
    \qquad\qquad
    \Ind\left(\GL_{n_{j'}'}, P'_{j'}; (\tau'_{j',k'})_{k'=1}^{K'_{j'}}\right),
\]
where $P_j, P'_{j'}$ are standard parabolic subgroups of type $(n_{j,k})_{k=1}^{K_j}$, $(n'_{j',k'})_{k'=1}^{K'_{j'}}$, and $\tau_{j,k}$, $\tau_{j',k'}$ are unitary \emph{square-integrable} representations of $\GL_{n_{j,k}}$, $\GL_{n'_{j',k'}}$. The group ranks satisfy
\begin{equation} \label{eq:group-ranks}
    \sum_{j=1}^J n_j = n,
    \qquad\qquad 
    \sum_{k=1}^{K_j} n_{j,k} = n_j,
\end{equation}
and similarly for $n'_{j'}$, $n'_{j',k'}$. The local $L$-factor and $\epsilon$-factor of the twisted Rankin--Selberg pair can then be expressed as (see \cite[(9), (10), and Theorems (3.1), (9.5)]{jacquet1983rankin})
\begin{equation} \label{eq:local-twist-dec}
\begin{aligned}
    L_q(s, (\pi \otimes \chi) \times \tilde\pi')
    &=
    \prod_{j=1}^{J} \prod_{k=1}^{K_j} \prod_{j'=1}^{J'} \prod_{k'=1}^{K'_{j'}}
    L\left(s + t_j - t'_{j'}, (\tau_{j,k} \otimes \chi) \times \tilde \tau'_{j',k'}\right),
    \\
    \epsilon_q(s, (\pi \otimes \chi) \times \tilde\pi')
    &=
    \prod_{j=1}^{J} \prod_{k=1}^{K_j} \prod_{j'=1}^{J'} \prod_{k'=1}^{K'_{j'}}
    \epsilon\left(s + t_j - t'_{j'}, (\tau_{j,k} \otimes \chi) \times \tilde \tau'_{j',k'}\right).
\end{aligned}
\end{equation}

We use the highly-restrictive conditions from \cref{eq:nice-cond-and-char} through the lemma below; the key point is that there is `not enough room' for any nontrivial supercuspidal constituents, by a similar reasoning as in \cite[\S 2]{blomer2025moments}. Here and throughout, for $m \in \Z_+$, we denote by $\St_m$
the Steinberg representation of $\GL_m(\Q_q)$, normalized to be unitary (and square-integrable); with the notation from \cref{eq:square-int-split-notation}, this is
\begin{equation} \label{eq:stm}
    \St_m = \Delta\left(m, \one\right),
\end{equation}
so in particular $\tilde \St_m = \St_m$ and $\St_1 = \one$.

\begin{lemma}[Special structure of local components] \label{lem:special-structure}
There exist indices $j_0, k_0$, $j_0', k_0'$, and real numbers $(u_{j,k})$, $(u'_{j',k'})$, such that
\[
\begin{aligned}
    (n_{j,k}, \tau_{j,k}[-iu_{j,k}]) &=
    \begin{cases}
        (2, \St_2), & (j, k) = (j_0, k_0), \\ 
        (1, \one), & \text{otherwise},
    \end{cases}
    \\
    (n_{j',k'}, \tau'_{j',k'}[-iu'_{j',k'}]) &=
    \begin{cases}
        (2, \St_2), & (j', k') = (j_0', k_0'), \\ 
        (1, \one), & \text{otherwise}.
    \end{cases}
\end{aligned}
\]
\end{lemma}

\begin{proof}
We prove the claim about $(n_{j,k}, \tau_{j,k})$; the other one follows identically. Writing $a(\tau) = \log_q \mf_\tau$ for the local conductor exponents at $q$ (so in particular $a(\chi) = 1$), we have (see, e.g., \cite{corbett2019explicit})
\[
    1 = a(\pi_q) = \sum_{j=1}^J \sum_{k=1}^{K_j} a(\tau_{j,k}). 
\]
This forces $a(\tau_{j_0,k_0}) = 1$ for a single pair of indices $(j_0, k_0)$, and $a(\tau_{j,k}) = 0$ for $(j, k) \neq (j_0, k_0)$. 

Now the unitary square-integrable representations $\tau_{j,k}$ of $\GL_{n_{j,k}}(\Q_q)$ are classified in \cite[Theorem 14.6.4]{goldfeld2011automorphicII}; see also \cite{assing2024observation,corbett2019explicit} and \cite[\S 2]{blomer2025moments}. The only possibility with $a(\tau_{j,k}) = 0$ is an unramified local character $|\cdot|^{iu_{j,k}}$, so we have $(n_{j,k}, \tau_{j,k}[-iu_{j,k}]) = (1, \one)$ when $(j, k) \neq (j_0, k_0)$. There are two possibilities with $a(\tau_{j_0,k_0}) = 1$:
\begin{itemize}
    \item[(1).] $n_{j_0,k_0} = 1$ and $\tau_{j_0,k_0}$ is a ramified character. Since all other square-integrable constituents are unramified, this would force the central character of $\pi_q$ to be ramified.
    \item[(2).] $n_{j_0,k_0} = 2$ and $\tau_{j_0,k_0} = \St_2[iu_{j_0,k_0}]$ is a unitary twist of the Steinberg representation of $\GL_2(\Q_q)$.
\end{itemize}
Since the central character of $\pi_q$ is trivial, the second case holds, and this completes the proof.
\end{proof}

Now given the values $(u_{j,k})$, $(u'_{j',k'})$ from \cref{lem:special-structure}, let us write
\[
    s_{j,k} := t_j + iu_{j,k},
    \qquad\qquad 
    s'_{j',k'} := t'_{j'} + iu'_{j',k'},
\]
so that
\begin{equation} \label{eq:local-twist-imag-shift}
\begin{aligned}
    L\left(s + t_j - t'_{j'}, (\tau_{j,k} \otimes \chi) \times \tilde \tau'_{j',k'}\right)
    &=
    L\left(s + s_j - s'_{j'}, (\tau_{j,k}[-iu_{j,k}] \otimes \chi) \times \tilde \tau'_{j',k'}[iu'_{j',k'}]\right),
    \\
    \epsilon\left(s + t_j - t'_{j'}, (\tau_{j,k} \otimes \chi) \times \tilde \tau'_{j',k'}\right)
    &=
    \epsilon\left(s + s_j - s'_{j'}, (\tau_{j,k}[-iu_{j,k}] \otimes \chi) \times \tilde \tau'_{j',k'}[iu'_{j',k'}]\right).
\end{aligned}
\end{equation}
Since the central characters of $\pi, \pi'$ are trivial, we have
\begin{equation} \label{eq:central-char-condition}
    q^{\sum_{j=1}^J \sum_{k=1}^{K_j} n_{j,k} s_{j,k}} = 1,
    \qquad\qquad 
    q^{\sum_{j'=1}^{J'} \sum_{k'=1}^{K'_{j'}} n_{j',k'} s_{j',k'}'} = 1.
\end{equation}

\begin{lemma}[Local factors for twisted Steinberg representations] \label{lem:local-square-integrable}
For $m, m' \in \{1, 2\}$, one has
\[
    L(s, (\St_m \otimes \chi) \times \St_{m'}) = 1,
    \qquad\qquad
    \epsilon(s, (\St_m \otimes \chi) \times \St_{m'}) = \epsilon_q(s, \chi)^{mm'} = \tau_\chi^{mm'} q^{-mm's}.
\]
\end{lemma}

\begin{proof}
By \cite[(5.5)]{rudnick1996zeros}, the $L$-factor can be expressed as
\[
\begin{aligned}
    L(s, (\St_m \otimes \chi) \times \St_{m'})
    &=
    L(s, \Delta(m, \chi) \times \Delta(m', \one))
    \\
    &=
    \prod_{\ell=1}^{\min(m, m')} L_q(s + \tfrac{m+m'}{2}-\ell, \chi \times \one)
    =
    1,
\end{aligned}
\]
where each $L_q(s, \chi) = 1$ since $\chi$ is ramified at $q$. 

For the $\epsilon$-factor, we find it easier to work on the Weil--Deligne side. By the local Langlands correspondence, we let $r(\tau)$ denote the Weil--Deligne representation associated to a generic, unitary, irreducible, smooth representation $\tau$ of $\GL_n(\Q_q)$. Then we have
\[
    \epsilon(s, (\St_m \otimes \chi) \times \St_{m'})
    =
    \epsilon(s, r(\St_m \otimes \chi) \otimes r(\St_{m'}))
    =
    \epsilon(s, \chi \otimes (r(\St_m) \otimes  r(\St_{m'}))).
\]
Now following Blomer--Comtat \cite[Proof of Proposition 3]{blomer2025moments}, we write $\Sp_m$ for the $m$-dimensional special Weil--Deligne representation (not unitary), and compute
\begin{gather*}
    r(\St_1) = r(\one) = \one, \qquad\qquad r(\St_2) = |\cdot|^{-1/2} \otimes \Sp_2,
    \qquad\qquad 
    r(\St_3) = |\cdot|^{-1} \otimes \Sp_3,
    \\
    r(\St_2) \otimes r(\St_2) = (|\cdot|^{-1/2} \otimes \Sp_2) \otimes (|\cdot|^{-1/2} \otimes \Sp_2) = \one \oplus (|\cdot|^{-1} \otimes \Sp_3)
    =
    \one \oplus r(\St_3).
\end{gather*}
Moreover, by \cite[Proposition 3.2]{assing2024observation}, for $m \in \Z_+$ we have
\[
    \epsilon(s, \chi \otimes r(\St_m)) =
    \epsilon_q(s, \chi)^m.
\]
Using that $\epsilon(s, \tau_1 \oplus \tau_2) = \epsilon(s, \tau_1) \epsilon(s, \tau_2)$ for Weil--Deligne representations $\tau_1, \tau_2$, it follows that
\[
    \epsilon(s, \chi \otimes (r(\St_m) \otimes  r(\St_{m'})))
    =
    \begin{cases}
    \epsilon_q(s, \chi),    & m = m' = 1, \\ 
    \epsilon(s, \chi \otimes r(\St_2)) = \epsilon_q(s, \chi)^2,    & \{m, m'\} = \{1, 2\}, \\ 
    \epsilon_q(s, \chi) \epsilon(s, \chi \otimes r(\St_3)) = \epsilon_q(s, \chi)^4, & m = m' = 2,
    \end{cases}
\]
which completes our computation.
\end{proof}

Now putting together \cref{eq:local-twist-dec,eq:local-twist-imag-shift,lem:special-structure,lem:local-square-integrable}, we find that
\[
    L_q(s, (\pi \otimes \chi) \times \tilde\pi')
    =
    \prod_{j=1}^{J} \prod_{k=1}^{K_j} \prod_{j'=1}^{J'} \prod_{k'=1}^{K'_{j'}}
    1
    =
    1
    ,
\]
and, by also employing \cref{eq:central-char-condition,eq:group-ranks},
\[
\begin{aligned}
    \epsilon_q(s, (\pi \otimes \chi) \times \tilde\pi')
    &=
    \prod_{j=1}^{J} \prod_{k=1}^{K_j} \prod_{j'=1}^{J'} \prod_{k'=1}^{K'_{j'}}
    \tau_\chi^{n_{j,k} n'_{j',k'}} q^{-n_{j,k} n'_{j',k'} (s + s_{j,k} - s'_{j',k'})}
    \\
    &=
    \tau_\chi^{n^2} q^{-n^2 s} q^{- n\sum_{j=1}^J\sum_{k=1}^{K_j} n_{j,k} s_{j,k} + n \sum_{j'=1}^{J'} \sum_{k'=1}^{K'_{j'}} n'_{j',k'} s'_{j',k'}}
    =
    \tau_\chi^{n^2} q^{-n^2 s} 
    ,
\end{aligned}
\]
as required in \cref{eq:ram-twist-local}.

\bibliographystyle{plain}
\bibliography{main}

\end{document}